\makeatletter \@addtoreset{equation}{section} \makeatother
\renewcommand\thetable{\thesection.\@arabic\c@table}
\theoremstyle{plain}
\newtheorem{maintheorem}{Theorem}
\newtheorem{maincorollary}{Corollary}
\newtheorem{theorem}{Theorem}[section]
\newtheorem{lemma}{Lemma}[section]
\newtheorem{remark}{Remark}[section]
\newtheorem{example}{Example}[section]
\newcommand{\vep}{\varepsilon}
\newcommand{\diam}{\operatorname{diam}}
\newcommand{\dist}{\operatorname{dist}}
\newcommand{\cC}{\mathcal{C}}
\newcommand{\cH}{\mathcal{H}}
\newcommand{\cL}{\mathcal{L}}
\newcommand{\cF}{\mathcal{F}}
\newcommand{\cU}{\mathcal{U}}
\newcommand{\cA}{\mathcal{A}}
\newcommand{\Homeo}{\text{Homeo}(M)}
\newcounter{main}
\title{On sensitivity to initial conditions and uniqueness  of \\ conjugacies for structurally stable diffeomorphisms}
\author[Jorge Rocha]{Jorge Rocha}
\address{Jorge Rocha, Departamento de Matem\'atica, Universidade do Porto, 
Rua do Campo Alegre, 687, 
4169-007 Porto, Portugal}
\email{jrocha@fc.up.pt}
\author[Paulo Varandas]{Paulo Varandas}
\address{Paulo Varandas, Departamento de Matem\'atica, Universidade Federal da Bahia\\
Av. Ademar de Barros s/n, 40170-110 Salvador, Brazil}
\email{paulo.varandas@ufba.br}
\begin{document}

\begin{abstract}
In this paper we study $C^1$-structurally stable diffeomorphisms, that is, $C^1$ Axiom A diffeomorphisms with the 
strong transversality condition.  In contrast to the case of dynamics restricted to a hyperbolic basic piece, structurally
stable diffeomorphisms are in general not expansive and the conjugacies between $C^1$-close structurally 
stable diffeomorphisms may be non-unique, even if there are assumed $C^0$-close to the identity. 
Here we give a necessary and sufficient condition for a structurally stable diffeomorphism to admit a 
dense subset of points with expansiveness and sensitivity to initial conditions.
Morever, we prove that the set of conjugacies between elements in the same conjugacy class is
homeomorphic to the $C^0$-centralizer of the dynamics. Finally, we use this fact to deduce that 
any two $C^1$-close structurally stable diffeomorphisms
are conjugated by a unique conjugacy $C^0$-close to the identity if and only if these are Anosov. 
\end{abstract}

\keywords{Structural stability, $C^0$-centralizers, expansiveness, conjugacy classes, uniform hyperbolicity}  
 \footnotetext{2000 {\it Mathematics Subject classification}:
Primary 37F15,  
Secondary 37D20, 
37C15, 
}
\date{\today}
\maketitle
 
\hfill{Dedicated to Welington de Melo }

\section{Introduction}

One of the leading problems considered by the dynamical systems community
has been to provide a global view of the space of dynamical systems. In fact, based on the pioneering works of 
Andronov, Pontryagin, Peixoto and Smale, in the nineties Palis proposed a conjecture that roughly describes the complement of uniform hyperbolicity 
as the space of diffeomorphisms that are approximated by those exhibiting either homoclinic tangencies or heteroclinic cycles. This program has been carried out with success in the $C^1$-topology, where perturbation tools as the Pugh closing lemma, the Franks' lemma, the Hayashi's connecting lemma or the Ma\~n\'e's ergodic closing lemma are available (see e.g.~\cite{Pugh, Ma1, Hay} and references therein). 
Uniform hyperbolicity helped to coin the idea of stability that is,  to characterize the dynamics that persist and behave similarly under small perturbations.  The stability theorem for isolated hyperbolic sets asserts that  
any isolated hyperbolic basic set $\Lambda$  for a $C^1$-diffeomorphism $f$ admits a continuation 
$\Lambda_g$ for any $C^1$-small perturbation $g$ 
of the original dynamics, whose dynamics is topologically conjugate to $f\mid_{\Lambda_f}$: there exists a unique
homeomorphism $h_g : \Lambda_f \to \Lambda_g$ that is $C^0$-close to the identity and so that $h_g\circ f
= g \circ h_g$. The uniqueness of the conjugacy $h_g$ at finite distance to the identify reflects the rigidity of hyperbolicity
and can be obtained, via Banach fixed point theorem, as a fixed point for the operator 
$L(h)=  g^{-1}\circ h\circ f$ acting on the space $\Homeo$ of homeomorphisms in a compact manifold $M$.

A question that arises naturally in the vein of the stability for uniformly hyperbolic dynamics is to understand if 
hyperbolicity is a necessary and sufficient condition to characterize structurally stable diffeomorphisms. 
Recall that a $C^1$-diffeomorphism $f$ is called \emph{structurally stable} if there exists an open neighborhood $\cU$ of $f$ in the $C^1$-topology such that for any $g\in \cU$ there exists a homeomorphism $h_g: M \to M$ such that $h_g$ conjugates the dynamics, that is, $h_g \circ f = g \circ h_g$. After the first examples of $\Omega$-explositions there was a clear idea 
that heteroclinic cycles and tangencies constitute obstructions to structural stability and that 
uniform hyperbolicity should play a key role (see e.g.~\cite{Sm70}).   
Robbin, Robinson and Ma\~n\'e \cite{Robbin72, Robinson,Ma1} completed the proof that $C^1$-structural stability is equivalent to the Axiom A and the strong transversality conditions (in dimension two this was obtained by de Melo~\cite{deMelo}). In particular, if $f$ is structurally stable, then there exists a 
$Df$-invariant decomposition $T_{\Omega(f)}M= E^s\oplus E^u$ and constants $C>0$ and $\lambda\in (0,1)$
such that 
$$
\|Df^n(x) \mid_{E^s_x}\| \le C \lambda^n
	\quad\text{and}\quad
	\|Df^{-n}(x) \mid_{E^u_x}\| \le C \lambda^n
$$
for every $n\ge 1$ and every $x$ in the non-wandering set $\Omega(f)$, and 
$\Omega(f)$ coincides with the closure of the periodic points. 
Moreover, the non-wandering set of a structurally stable diffeomorphism can be decomposed as a finite set of 
hyperbolic sets whose basins of attraction cover the entire manifold. A priori, such decomposition could suggest 
that the rigidity of the conjugacies on each hyperbolic basic set would `spread' to the manifold or, in other words, 
that the global conjugacy could be completely determined by the conjugacies of each of the hyperbolic basic pieces.

Our purpose here is to revisit structurally stable diffeomorphisms, discussing both expansiveness properties
as their space of conjugacies given, as starting point, the (local) $\Omega$-stablity of hyperbolic basic pieces. 
In the context of structural stability the notion of expansiveness is associated to a very 
rigid phenomenon, because a $C^1$-structurally stable diffeomorphism is expansive if and only if it is 
Anosov~\cite{Ma75}. 
In the context of  compact surfaces, all expansive homeomorphisms on surfaces are conjugate to Anosov 
diffeomorphisms  if $M=\mathbb T^2$ and are pseudo-Anosov if the genus is larger or equal to two \cite{Lewowicz,Hiraide}. 
Thus, $\mathbb S^2$ admits structurally stable diffeomorphisms but admits no expansive homeomorphisms. 
In view of the later result it is natural to ask wether subsets of points with some expansiveness can be topologically large. 
Although there are structurally stable diffeomorphisms that admit no dense subset of points with expansiveness (see e.g.
Example~\ref{ex:N-S}) 
we prove that all structurally stable diffeomorphisms whose  topological basins of trivial attractors and repellers in $\Omega(f)$ do not intersect admit a dense subset of  points with expansiveness (cf. Theorem~\ref{prop:expansive.points}).   In rough terms, dense expansiveness holds if and only if some north-pole south-pole kind of dynamics cannot be embedded in the original dynamical system, and this condition implies
the sensitivity to initial conditions. 

The second part of our work concerns the study of the set of conjugacies appearing naturally in the context of structural stability.
 If $f$ is a structurally stable diffeomorphism there exists an open neighborhood $\cU$ of $f$ in the $C^1$-topology such that for any  $g\in \cU$ there exists a homeomorphism 
$h_g: M \to M$ such that $h_g$ conjugates both dynamics, that is, $h_g \circ f = g \circ h_g$. 
It is natural to ask under which conditions this conjugacy can be taken unique $C^0$-close to the identity.
Our approach to this problem is to relate the set of all such conjugacies with the $C^0$-centralizer of the dynamics 
by proving that for any diffeomorphism $g$ in the conjugacy class of a given diffeomorphism $f$ the space $\cH_{f,g}$ 
of conjugacies between $f$ and $g$ is homeomorphic to the $C^0$-centralizer 
$
Z^0(f)=\{h\in \Homeo : h\circ f = f\circ h\}
$ 
of $f$ (cf. Theorem~\ref{prop:bijection}). 
This leads to the comprehension of the $C^0$-centralizer of $C^1$-structurally stable diffeomorphisms.
In his seminal paper \cite{Sm90}, Smale asked whether `most' dynamical systems 
would have trivial $C^1$-centralizer. Although this is not yet completely understood, there are evidences that
Smale's question has an affirmative answer and some important contributions in that direction include e.g.~\cite{Ko70,PY89, 
Bu04,BCW, Fi09, BF}. 
From the purely topological viewpoint, one cannot expect an affirmative answer to Smale's question to hold.
Indeed, $C^0$-centralizers are larger and there are open sets of surface $C^1$-Anosov diffeomorphisms whose 
$C^0$-centralizer is discrete but non-trivial~\cite{Ro05}.
We refer the reader to Section~\ref{sec:prelim} for precise definitions and a more detailed discussion.
We include some examples (see Examples~\ref{ex:N-S} and ~\ref{ex:2}) to illustrate that even in the absence of
trivial basic pieces of the non-wandering set the conjugacies may contain a continuum of homeomorphisms
$C^0$-close to the identity. 
We prove that the $C^0$-centralizer of a structurally stable diffeomorphism is discrete if and only if it is an Anosov
diffeomorphism (cf. Theorem~\ref{abstractthm}). As a consequence, we deduce that the Ma\~n\'e characterization 
is optimal: there exist structurally stable diffeomorphisms with an invariant and dense set of points with expansiveness which are not Anosov (cf. Example~\ref{ex:2}).  

The paper is organized as follows. 
In Section~\ref{sec:prelim} we recall the notions and some results about uniformly hyperbolic sets, 
homoclinic classes and 
structural stability.  In Section~\ref{sec:statements} we state our main results. The proofs of the two main results will appear from Sections ~\ref{sec:dense} to ~\ref{sec:uniqueness}. Section~\ref{sec:examples} is devoted to examples.

\section{Preliminaries}\label{sec:prelim}

In this section we recall some results
about uniformly hyperbolic sets, homoclinic classes and structurally stable diffeomorphisms.
Throughout, let $M$ be a compact Riemannian manifold, let $d$ denote the Riemannian distance on $M$,
and let $\text{Diff}^{\, r}(M)$ ($r\ge 1$) denote the space of $C^r$-diffeomorphisms on $M$.

\subsubsection*{Hyperbolic sets, expansiveness and sensitivity to initial conditions}

Given $f \in \text{Diff}^{\, r}(M)$ ($r\ge 1$), let ${\text{Per}(f)}$ denote the set of periodic points for $f$ and 
let $\Omega(f) \subset M$ denote the non-wandering set of $f$. 
We say that a compact $f$-invariant set $\Lambda \subset M$ is a \emph{uniformly hyperbolic} for $f$ if
there exists a $Df$-invariant splitting $T_{\Lambda} M = E^s \oplus E^u$ and constants $C>0$ and $\lambda\in (0,1)$ 
so that 
$
\| Df^n(x)\mid_{E^s_x} \| \le C \lambda^n 
$
and
$
	\| (Df^n(x)\mid_{E^u_x})^{-1} \| \le C \lambda^n 
$
for every $x\in \Lambda$ and $n\ge 1$. 
A periodic point $p\in \text{Per}(f)$ is hyperbolic if $\mathcal O(p)=\{p, f(p), \dots, f^{n-1}(p)\}$ is a 
hyperbolic set for $f$, where $n$ stands for the period of $p$. Given
an $f$-invariant set $\Lambda$, we say that $f\mid_\Lambda$ is \emph{transitive} if it has a dense orbit, that is, there exists
$x\in \Lambda$ so that $\mathcal O (x) :=  \{  f^n(x) : n\in \mathbb Z \}$ is dense in $\Lambda$.
Given a point $x\in M$ and $\vep>0$ the \emph{$\vep$-stable set} of $x$ is defined by
$W_\vep^s(x) =\left\{y\in M : d(f^{n}(y),f^{n}(x))\le \vep \text{ for all } n \ge 0\right\}.$
Similarly,  $W_\vep^u(x) =\left\{y\in M : d(f^{-n}(y),f^{-n}(x))\le \vep \text{ for all } n \ge 0\right\}$
is the $\vep$-\emph{unstable set} of $x$.
Given a hyperbolic set $\Lambda$ for $f$ there exists a uniform $\vep>0$ so that the stable and unstable sets 
$W_\vep^s(x)$ and $W_\vep^u(x)$ are $C^r$ submanifolds tangent to $E^s_x$ and $E^u_x$, respectively, 
for every $x\in \Lambda$. These are called, respectively, the \emph{local stable} and
\emph{local unstable manifolds} at $x$ of size $\vep$. 
A subset $\Delta \subset W^s(x)$ is called a \emph{fundamental domain} 
if for any $z\in W^s(x) \setminus \{x\}$ there exists a unique $n=n(z) \in \mathbb Z$ so that $f^n(z) \in \Delta$.
We refer the reader to \cite{Shub} for more details.

Given $\vep>0$ we say that $\Lambda\subset M$ is a set of \emph{$\vep$-expansiveness} for $f$ if 
for any $x \neq y\in \Lambda $ there exists $n\in \mathbb Z$ so that $d(f^n(x), f^n(y))>\vep$.
Observe that no $f$-invariance condition is required.
When no confusion is possible 
we will refer to these simply as sets of expansiveness. 
We say that $f$ is \emph{densely expansive} if there exists an $\vep$-expansive 
dense set  in $M$, for some $\vep>0$. 
An homeomorphism $f$ is \emph{expansive} if the manifold $M$ is a set of expansiveness for $f$.
Expansive homeomorphisms in compact manifolds of dimension smaller or equal to $2$ are either 
Anosov or pseudo-Anosov (see e.g. ~\cite{Lewowicz, Hiraide} and references therein). 
Thus, there are geometrical and topological obstructions for a manifold to admit expansive dynamics.
Although there exists no complete classification of expansive homeomorphisms in dimension 
larger than $2$, very important contributions in this direction have been obtained by Vieitez (see~\cite{Vieitez93, Vieitez02} and references therein).

Finally we recall the notion of sensitivity to initial conditions. We say that $f$ has \emph{sensitivity to initial conditions}
if there exists $\vep>0$ so that every $x\in M$ is accumulated by a sequence of points $(x_n)_n$ in $M$ such that 
$\sup_{k\in \mathbb Z} d(f^k(x), f^k(x_n))>\vep$. This condition, weaker than expansiveness does not hold for all
structurally stable diffeomorphisms (e.g. north-pole south-pole dynamics).

\subsubsection*{$C^r$-centralizers}

Given $f \in \text{Diff}^{\, r}(M)$, $r\in \mathbb N \cup\{\infty\}$ and $0\le k \le r$, the \emph{$C^k$-centralizer} for $f$ is the set
$
Z^k(f) = \{ g\in \text{Diff}^k(M) \colon g\circ f = f\circ g \},
$
where $\text{Diff}^0(M)$ stands for the space $\Homeo$. 
For every $0\le k \le n$, it is clear that $Z^k(f)$ is a subgroup of $(\text{Diff}^k(M), \circ)$
and it contains the subgroup $\{ f^n \colon n\in\mathbb Z \}$.
Clearly, the following inclusion holds
$Z^0(f) \supset Z^1(f) \supset Z^2(f) \supset \dots \supset  Z^r(f) \supset \{ f^n \colon n\in\mathbb Z \}.$
We say that $f \in \text{Diff}^{\, r}(M)$ has \emph{trivial $C^k$-centralizer} if the centralizer is the smallest possible, that is,
$
Z^k(f)=\{ f^n \colon n\in\mathbb Z \}.
$
We say that the diffeomorphism $f \in \text{Diff}^{\, r}(M)$ has \emph{discrete $C^k$-centralizer} if there is $\vep>0$ so that $d_0(h_1,h_2)>\vep$ for every distinct $h_1, h_2 \in Z^k(f)$.
Walters~\cite{Wa70} proved that expansive homeomorphisms have discrete $C^0$-centralizers.
The centralizer of Komuro expansive flows (which include the Lorenz attractor) and $\mathbb R^d$ actions
on compact manifolds was described in \cite{BoRoVa}.

\subsubsection*{Axiom A diffeomorphisms and homoclinic classes}

In what follows we collect some results on Axiom A diffeomorphisms (proofs and more details can be found 
in ~\cite{Shub}). We say that a diffeomorphism $f \in \text{Diff}^{\, r}(M)$, $r\ge 1$, is \emph{Axiom A}  if (i) $\overline{\text{Per}(f)}=\Omega(f)$
and (ii) $\Omega(f)$ is a uniformly hyperbolic set.
Clearly all periodic points of Axiom A diffeomorphisms are hyperbolic. We say that $f$ is an \emph{Anosov diffeomorphism} 
if the whole manifold $M$ is a hyperbolic set for $f$.
If $p \in {\text{Per}(f)}$ is a hyperbolic periodic point for $f$ of period $k\ge 1$ 
then there exists a $C^1$-neighborhood $\cU$ of $f$ and a neighborhood $V\subset M$ of $p$ so that any 
$g\in \cU$ admits a unique hyperbolic periodic point $p(g)\in V \cap \text{Per}(g)$ of period $k$, referred as the \emph{continuation} of $p$. 
Given a hyperbolic periodic point $p$, 
the \emph{homoclinic class $H(p,f)$} for $p$ with respect to $f$ is defined by
$
H(p,f)= \overline{ W^u( \mathcal O(p) ) \pitchfork W^s(\mathcal O(p) ) }.
$
By the spectral decomposition theorem, 
for any Axiom A diffeomorphism $f$ there are finitely many (hyperbolic) periodic points $p_\ell$ ($1\le \ell \le k$)
and hyperbolic homoclinic classes $\Lambda_\ell=H(p_\ell,f)$ so that
$
\Omega(f) = \Lambda_1 \cup \Lambda_2 \cup \dots \cup \Lambda_k
$ 
(\emph{spectral decomposition}).
For every  $C^1$-small perturbation $g$ of $f$ the hyperbolic set $H(p_\ell,f)$ admits a 
continuation $H(p_\ell(g),g)$.  
Consider the partial order on the space $\{\Lambda_i : 1\le i \le k\}$ given by 
$\Lambda_i \succ \Lambda_j$ if and only if $W^u(\Lambda_i) \cap W^s(\Lambda_j) \neq \emptyset$.
We say that $f$ has \emph{no cycles} if whenever $\Lambda_{i_s} \succ \dots \succ \Lambda_{i_1}$ and 
$\Lambda_{i_1} = \Lambda_{i_s}$ then $s=1$.
Moreover, if $f$ is Axiom A with no cycles then there exists a \emph{filtration} adapted to $\Omega(f)$: there exists a nested sequence 
$\emptyset = M_0 \subset M_1 \subset \dots \subset M_k=M$ of smooth codimension 0 submanifolds with boundary
so that $f(M_i) \subset \text{int}(M_i)$ and $\bigcap_{n\in \mathbb Z} f^n(M_{i} \setminus M_{i-1})$
consists of a finite union of basic pieces of $\Omega(f)$ for every $1\le i \le k$.
We say that an $f$-invariant, compact and transitive set $\Lambda$ is an \emph{attractor} if there exists an open neighborhood 
$U$ of $\Lambda$ so that $\Lambda= \bigcap_{n\ge 0} f^n(\overline U)$. An $f$-invariant set is a \emph{repeller} 
if it is an attractor for $f^{-1}$. Finally we say that a basic piece of the non-wandering set is trivial if it consists of 
a periodic orbit. Given an attractor $\Lambda$ for $f$, the topological basin of attraction is the set $B(\Lambda)=\{ x\in M : \dist(f^{n} x, \Lambda) \to 0 \; \text{as} \; n\to\infty\}$. The topological basin of repulsion for some repeller $\Lambda$
is defined as the topological basin of attraction for $\Lambda$ with respect to $f^{-1}$.

\subsubsection*{Conjugacy classes and structural stability}

Given a $C^r$-diffeomorphism $f$, $r\ge 1$, we define the \emph{conjugacy class} of the diffeomorphism $f$ as the set 
$\mathcal{C}_f =\{ g\in \text{Diff}^{\,r}(M) \colon \text{ there}$ $\text{exists } h\in \Homeo \text{ s.t. } h\circ g = f\circ h \}.$ 
Consider  $\cH_f = \bigcup_{g\in \cC_f} \cH_{f,g}$ where 
$\cH_{f,g}=\{ h\in \Homeo : h\circ f = g\circ h\}$. 
In other words, $\cH_{f,g}$ 
denotes the space of all homeomorphisms that conjugate $f$ and $g$. 
Endow the space of homeomorphisms $\Homeo$ on $M$ with the distance $d_0$ defined by
$$
d_0( h_1,h_2) = \sup_{x\in M} d( h_1(x), h_2(x) ) + \sup_{x\in M} d( h_1^{-1}(x), h_2^{-1}(x) )
$$
for all $h_1,h_2 \in \Homeo$. Given $f \in \text{Diff}^{\, 1}(M)$, we say that $f$ is \emph{structurally stable} if
there exists an open neighborhood $\cU$ of $f$ in the $C^1$-topology such that for any $g\in \cU$ 
there exists a homeomorphism $h_g: M \to M$ such that $h_g$ conjugates the dynamics, 
that is, $h_g \circ f = g \circ h_g$.  
Robbin, Robinson and Ma\~n\'e~\cite{Robbin71,Robinson, Ma1} proved that a $C^1$-diffeomorphism is structurally stable if and only if it is Axiom A and satisfies the \emph{strong transversality condition}: if 
$
E^\pm_x = \{ v\in T_x M \colon \lim_{n\to\infty} \| Df^{\mp n}(x) v\|= 0  \}
$
then $T_xM = E^-_x + E^+_x$ for every $x\in M$.
Moreover, if $f$ is an Axiom A diffeomorphism then the strong transversality
condition is equivalent to the transversal intersection of every stable and unstable manifolds~\cite[Proposition~7.5]{Robbin71}.  In the proof of the first part of the stability conjecture, Robbin~\cite{Robbin71}, Robinson~\cite{Robinson} used the strategy developed by Moser~\cite{Moser}
(in the proof of the stability of Anosov diffeomorphisms) to construct conjugacies that vary continuously with the dynamical system:

\begin{theorem}\label{thm:RR} (Robbin, Robinson)
Let $f$ be a $C^1$ Axiom A diffeomorphism with the strong transversality condition. Then there exists an open neighborhood
$\mathcal U$ of $f$ and $K>0$ such that, for any $g\in \mathcal U$: 
\begin{enumerate}
\item[(i)] \emph{there exists} $h=h_g \in \Homeo$ so that $h_g \circ f = g\circ h_g$; and
\item[(ii)] $d_{C^0}(h_g, id) \le K d_{C^0}(f,g)$.
\end{enumerate}
\end{theorem}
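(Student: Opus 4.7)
The plan is to follow Moser's strategy: reformulate $h\circ f = g\circ h$ as a fixed-point problem in a Banach space near the identity. Fix a smooth Riemannian exponential map and identify a $C^0$-neighborhood of $\mathrm{id}$ in $\Homeo$ with a ball in the space $\Gamma^0(TM)$ of continuous sections of $TM$ via $u\mapsto h_u$, where $h_u(x)=\exp_x u(x)$. The conjugacy equation $h_u\circ f = g\circ h_u$ then becomes a nonlinear equation $\cF_g(u)=0$. Reading it in local normal coordinates at $f(x)$ yields, schematically,
$$
\cF_g(u)(x) = u(f(x)) - Df(x)\,u(x) - (g-f)(h_u(x)) + R(x,u),
$$
where $R$ collects higher-order terms. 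The Fréchet derivative at $u=0$ is thus the linear operator $(T_f u)(x) = u(f(x)) - Df(x)\,u(x)$ acting on $\Gamma^0(TM)$, and it suffices to exhibit a bounded right inverse of $T_f$: by the implicit function theorem (or a direct contraction argument) this produces a solution $u_g$ satisfying $\|u_g\|_{C^0}\le K\,\|g-f\|_{C^0}$, giving both (i) and (ii) at once via $h_g = h_{u_g}$.

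The construction of the right inverse proceeds in two stages. Over $\Omega(f)$, uniform hyperbolicity provides the $Df$-invariant splitting $E^s\oplus E^u$. Decomposing $u=u^s+u^u$ and $\phi=\phi^s+\phi^u$, the equation $T_f u = \phi$ splits into two component equations. The stable part is inverted by a convergent Neumann-type series whose convergence follows from the contraction $\|Df^n|_{E^s}\|\le C\la^n$; symmetrically, the unstable part is inverted using $\|Df^{-n}|_{E^u}\|\le C\la^n$. This yields a bounded right inverse of $T_f$ on continuous sections over $\Omega(f)$. To extend it to sections over all of $M$, one invokes the strong transversality condition $T_xM = E^-_x + E^+_x$ for every $x\in M$, together with the spectral decomposition $\Omega(f)=\Lambda_1\cup\dots\cup\Lambda_k$ and an adapted filtration $\emptyset=M_0\subset\dots\subset M_k=M$. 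One proceeds inductively across the filtration levels: on each layer $M_i\setminus M_{i-1}$ the spectral order on basic pieces controls the asymptotic behaviour of orbits, strong transversality allows a decomposition of $\phi$ along $E^-+E^+$, and the inductive hypothesis supplies boundary data consistent with the inverse already constructed on $M_{i-1}$.

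The main obstacle is precisely this gluing step: hyperbolicity alone only inverts $T_f$ over $\Omega(f)$, and the forward series emanating from one basic piece need not match, along a wandering connecting orbit, the backward series coming from another. Strong transversality is what forces compatibility, since $E^-_x+E^+_x=T_xM$ ensures that the partial solutions obtained by iterating forward toward attracting pieces and backward toward repelling pieces together span the full tangent space at every point; the sum need not be direct, and any overlap is absorbed into the kernel of $T_f$, which reflects the non-uniqueness of the conjugacy later encoded by $Z^0(f)$. Once $T_f^{-1}$ is constructed with uniform operator norm on a small $C^1$-neighborhood of $f$, the contraction mapping principle applied to $u\mapsto u - T_f^{-1}\cF_g(u)$ on a small ball of $\Gamma^0(TM)$ produces $u_g$, and the linearity of the scheme in the perturbation $g-f$ yields simultaneously the existence of $h_g=\exp\circ u_g$ and the estimate $d_0(h_g,\mathrm{id})\le K\,d_{C^0}(f,g)$.
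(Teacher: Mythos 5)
The paper does not prove this theorem; it cites Robbin and Robinson and only records that their argument is a fixed-point scheme for operators built from fixed neighborhoods of the basic sets and a partition of unity. Your outline follows exactly that strategy, and the reduction to inverting $(T_f u)(x)=u(f(x))-Df(x)\,u(x)$ together with the Neumann-series inversion over $\Omega(f)$ is correct. But the step you yourself flag as the main obstacle is where the proof actually lives, and your treatment of it is not an argument. Strong transversality gives $E^-_x+E^+_x=T_xM$ pointwise, but the dimensions of $E^\pm_x$ jump as $x$ crosses between basins of different basic pieces, so this is not a continuous splitting of $TM$ over $M\setminus\Omega(f)$ and one cannot ``decompose $\phi$ along $E^-+E^+$'' globally; Robbin replaces it by a continuous, non-invariant approximate splitting constructed on neighborhoods of the basic sets and patched with a partition of unity subordinate to the filtration. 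The boundedness of the resulting right inverse is the delicate point: a wandering orbit passing near a saddle connection spends arbitrarily long stretches where neither the forward nor the backward geometric series converges termwise, and the uniform operator bound (needed for your constant $K$ and for a contraction constant uniform over $\cU$) requires the careful ordering supplied by the filtration. The remark that ``any overlap is absorbed into the kernel of $T_f$'' conflates non-uniqueness of solutions with existence and boundedness of a chosen right inverse, and proves neither.

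A second genuine gap: your scheme produces a continuous $u_g$ with $h_g=\exp\circ u_g$ satisfying $h_g\circ f=g\circ h_g$ and $d_{C^0}(h_g,id)\le K\,d_{C^0}(f,g)$, but a continuous map $C^0$-close to the identity is surjective (by degree) and \emph{not} automatically injective, so you have only built a semiconjugacy. Establishing that $h_g\in\Homeo$ is a separate step in Robbin--Robinson: injectivity is proved using expansiveness on the basic pieces together with the strong transversality condition along wandering orbits (for Moser's Anosov case it follows from expansiveness of $g$ on all of $M$, which is unavailable here --- indeed the present paper's Theorem~\ref{abstractthm} shows non-Anosov structurally stable maps are never expansive). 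Without this step conclusion (i) as stated, namely $h_g\in\Homeo$, is not obtained.
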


The proof of the previous theorem relies on Banach's fixed point theorem 
for a family of operators $\cL_g$ 
that vary continuous with the diffeomorphism $g$ in 
a $C^1$-neighborhood $\cU$
of $f$, but whose construction depends on some fixed neighborhoods of the basic sets and a partition of unity for 
$M$ (cf. \cite{Robbin71}).
For that reason the selected conjugacies can be chosen to depend continuously with respect to the dynamics 
but uniqueness is not guaranteed. In fact, uniqueness of conjugacies $C^0$-close to the identity may fail 
(cf. Example~\ref{ex:N-S}). This is in a strong contrast
with the fact that the conjugacy restricted to each basic piece of the non-wandering set is unique and H\"older continuous.
Finally, for completeness we observe that if $f\in\text{Diff}^{\,2}(M)$ is $C^1$-structurally stable then there exists a $C^1$-open neighborhood $\cU$ of $f$ so that the conjugacy map $\mathcal U \ni g \mapsto h_g \in \Homeo$ can be chosen
to be $C^1$-differentiable \cite[Theorem~2]{Franks}.

\section{Statement of the main results}\label{sec:statements}

This section is devoted to the statement of our main results.

\subsection*{Expansiveness}

First we provide a necessary and sufficient condition for a structurally stable diffeomorphism 
to be densely expansive.

\begin{maintheorem}\label{prop:expansive.points}
Let $f\in \text{Diff}^{\,1}(M)$ be a structurally stable diffeomorphism. The following are equivalent: 
\begin{enumerate}
\item the topological basins of trivial attractors and repellers in $\Omega(f)$ do not intersect;
\item $f$ is densely expansive (i.e. there exists $\vep>0$ and a dense subset $D\subset M$ such that for 
any $x \neq y \in D$  there exists $n\in \mathbb Z$ satisfying $d(f^n(x), f^n(y))>\vep$);
\item $f$ has sensitivity to initial conditions.
\end{enumerate}
\end{maintheorem}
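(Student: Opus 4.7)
My plan is to close the cycle $(2)\Rightarrow(3)\Rightarrow(1)\Rightarrow(2)$. For $(2)\Rightarrow(3)$, given a dense $\vep$-expansive set $D$, for any $x\in M$ and any neighborhood $U\ni x$ I pick two distinct points $y,y'\in D\cap U$; by expansiveness some iterate separates them by more than $\vep$, and the triangle inequality then separates one of them from $x$ by at least $\vep/2$ under the same iterate. This produces, for every $x$, a sequence of points accumulating on $x$ whose orbits move $\vep/2$-apart from the orbit of $x$, which is precisely the sensitivity condition.

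For $(3)\Rightarrow(1)$ I argue contrapositively: suppose a trivial sink $p$ and a trivial source $q$ have intersecting topological basins, and pick $x$ in the intersection. Both basins are open, and near $p$ (resp. near $q$) forward iteration (resp. backward iteration) is a uniform contraction toward $p$ (resp. $q$) inside a small invariant neighborhood. Given $\vep>0$, pick $N$ so large that $f^n x$ is $\vep/4$-close to $p$ for $n\ge N$ and to $q$ for $n\le -N$; continuity of the finitely many iterates $f^n$ with $|n|\le N$, together with the attracting/repelling neighborhoods of $p$ and $q$, yields a neighborhood $V$ of $x$ such that $\sup_{n\in\mathbb Z}d(f^n x,f^n y)\le \vep$ for every $y\in V$. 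Since $\vep$ was arbitrary, this contradicts sensitivity.

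The substantive implication is $(1)\Rightarrow(2)$. Use the spectral decomposition $\Omega(f)=\Lambda_1\cup\cdots\cup\Lambda_k$ and fix $\vep_0>0$ smaller than the expansiveness constants on every non-trivial basic piece, than the radii of local stable/unstable manifolds, and than the Lebesgue number of isolating neighborhoods of the basic pieces. Under (1) the open basins $S$ of trivial sinks and $R$ of trivial sources are disjoint, so every $x\in M$ has $\omega(x)$ or $\alpha(x)$ contained in a non-trivial basic piece, and the orbit of $x$ therefore spends infinite time in a neighborhood of an expansive basic piece, either in forward or in backward time. I would show that this confines the bi-infinite shadow set $\Phi_{\vep_0}(x)=\{y\in M:d(f^n x,f^n y)\le \vep_0 \text{ for all } n\in\mathbb Z\}$ to the strong stable leaf through $x$ when $\omega(x)$ is non-trivial, or to the strong unstable leaf through $x$ when $\alpha(x)$ is non-trivial: isolation of the non-trivial basic piece forces the relevant half-orbit of $y$ to shadow an orbit inside the piece, and expansiveness on the piece identifies this shadow with the one already shadowing the orbit of $x$. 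The strong transversality condition then makes $\Phi_{\vep_0}(x)$ a proper sub-manifold through $x$, collapsing to $\{x\}$ in the doubly non-trivial case. I then build a dense $\vep_0$-expansive set $D=\{x_n\}_{n\ge 1}$ inductively on a countable basis of $M$: at step $n$ choose $x_n$ in the $n$-th basis element and outside the union of the nowhere-dense sets $\Phi_{\vep_0}(x_1),\dots,\Phi_{\vep_0}(x_{n-1})$, which is possible by the Baire property.

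I expect the main obstacle to be the confinement of $\Phi_{\vep_0}(x)$ uniformly in $x$, particularly for heteroclinic points whose orbits traverse several basic pieces before reaching the expansive regions at infinity; patching the local product structures of different pieces along such orbits, and extracting uniform scales, is where the strong transversality condition has to be exploited carefully. A secondary technical difficulty is that the constant $\vep_0$ should witness the separation of every pair in the constructed set $D$, which requires a uniform choice of isolating neighborhoods together with an argument that the shadow sets remain nowhere dense under all such uniform bounds.
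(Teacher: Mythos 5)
Your cycle $(2)\Rightarrow(3)\Rightarrow(1)\Rightarrow(2)$ is sound, and the implications $(2)\Rightarrow(3)$ and $(3)\Rightarrow(1)$ match the paper's arguments (the paper treats the first as immediate and proves the second by exactly your north-pole/south-pole contrapositive). For the substantive implication $(1)\Rightarrow(2)$, however, you take a genuinely different route. The paper builds the dense expansive set $D$ \emph{explicitly}: it selects finite families $\Theta_a,\Theta_r$ of periodic orbits in attractors/repellers, partitions the periodic points of the related non-trivial repellers into infinitely many infinite subfamilies, and uses the $\lambda$-lemma to place, densely in each fundamental domain of $W^s(p)$, heteroclinic points lying on unstable manifolds of \emph{pairwise distinct} periodic points; two such points are then separated under backward iteration because distinct periodic orbits separate. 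Your argument replaces all of this combinatorial bookkeeping by a Baire-category scheme: show that the bi-infinite shadow set $\Phi_{\vep_0}(x)$ is closed and nowhere dense for every $x$, then pick $x_n$ in the $n$-th element of a countable basis avoiding $\Phi_{\vep_0}(x_1),\dots,\Phi_{\vep_0}(x_{n-1})$. This is cleaner and more conceptual; the price is that the confinement lemma ($B^f_\infty(x,\vep_0)$ lies in the global stable manifold $W^s(p_x)$, a countable union of positive-codimension embedded disks, hence $\Phi_{\vep_0}(x)$ is a closed subset of a meager set) must be proved with a uniform $\vep_0$ — and the paper does essentially prove this statement, but only later, inside the proof of Theorem~\ref{abstractthm} $(iii)\Rightarrow(iv)$. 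Your worry about patching product structures along orbits that traverse several basic pieces is unfounded: only the terminal piece containing $\omega(x)$ (resp.\ $\alpha(x)$) matters for the forward (resp.\ backward) shadow set, and the relevant scales are uniform because there are finitely many basic pieces.

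One point does need repair. Your dichotomy ``every $x$ has $\omega(x)$ or $\alpha(x)$ contained in a \emph{non-trivial} basic piece'' is false under hypothesis (1): a point $x$ heteroclinic from a trivial source to a trivial \emph{saddle} has both limit sets in trivial pieces without violating (1). What (1) actually gives is that not both ``$\omega(x)$ is a trivial attractor'' and ``$\alpha(x)$ is a trivial repeller'' can hold; the correct dichotomy is that for each $x$ either the piece containing $\omega(x)$ is not a periodic sink (so $\dim E^s<\dim M$ and the forward shadow set has positive codimension) or the piece containing $\alpha(x)$ is not a periodic source. Since a trivial saddle is still an isolated expansive hyperbolic set with proper-dimensional stable manifold, your confinement argument applies verbatim in that case, so the proof survives once the dichotomy is restated. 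Also note that the final ``collapse to $\{x\}$'' via strong transversality is not needed for $(1)\Rightarrow(2)$: nowhere-density of $\Phi_{\vep_0}(x)$, which already follows from confinement to a single proper-dimensional lamination, is all your Baire construction requires.
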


One should mention that whenever the topological basins of trivial attractors and repellers in $\Omega(f)$ do not intersect,
in the proof of Theorem~\ref{prop:expansive.points}, we construct a dense set of expansiveness $D \subset M$ 
which is not $f$-invariant and its saturated set $\bigcup_{n\in\mathbb Z} f^n(D)$ is not necessarily expansive. 

\subsection*{Conjugacy classes and $C^0$-centralizers}\label{sec:conju}

The following simple result builds a bridge 
between the $C^0$-centralizer of a dynamical system $f$ and set of conjugacies between diffeomorphisms in the same conjugacy class.

\begin{maintheorem}\label{prop:bijection}
Let $f \in \text{Diff}^{\,r}(M)$, $r\ge 0$. For every $g\in \cC_f$ and $h\in \mathcal H_{f,g}$ the map 
$$
\begin{array}{rccc}
F_h \colon &  Z^0(f) & \to & \mathcal H_{f,g} \\\medskip 
	& \tilde f & \mapsto & h \circ \tilde f
\end{array}
$$
is a homeomorphism and satisfies $F_h(\tilde f \circ \hat f ) = F_h(\tilde f) \circ h^{-1} \circ F_h(\hat f)$ for every
$\tilde f, \hat f \in Z^0(f)$. In particular,
\begin{itemize}
\item[(i)] $\mathcal H_{f,g}$ is homeomorphic to $Z^0(f)$ for every $g \in \cC_f$;
\item[(ii)] $\mathcal H_{f}$ is homeomorphic to $\cC_f \times Z^0(f)$;
\item[(iii)] $\mathcal H_{f,g}$ is a discrete subset of $\Homeo$ if and only if $Z^0(f)$ is a discrete 
subgroup of $\Homeo$. 
\end{itemize}
\end{maintheorem}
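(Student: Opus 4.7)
The plan is to show that $F_h$ is a bijection with explicit inverse $G_h \colon \phi \mapsto h^{-1} \circ \phi$, to check that both $F_h$ and $G_h$ are continuous with respect to $d_0$, to establish the twisted product identity by direct computation, and then to read off (i), (ii), (iii) as formal consequences. First I would verify well-definedness: for any $\tilde f \in Z^0(f)$, using $\tilde f \circ f = f \circ \tilde f$ together with $h \circ f = g \circ h$, one has $(h \circ \tilde f) \circ f = h \circ f \circ \tilde f = g \circ h \circ \tilde f$, so $F_h(\tilde f) \in \mathcal H_{f,g}$. For $G_h$, rewriting the conjugacy relation as $f \circ h^{-1} = h^{-1} \circ g$ and computing $(h^{-1} \circ \phi) \circ f = h^{-1} \circ g \circ \phi = f \circ (h^{-1} \circ \phi)$ shows $G_h(\phi) \in Z^0(f)$. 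A direct check gives $G_h \circ F_h = \text{id}$ and $F_h \circ G_h = \text{id}$, so $F_h$ is bijective. The twisted product identity follows at once from $F_h(\tilde f) \circ h^{-1} \circ F_h(\hat f) = (h \circ \tilde f) \circ h^{-1} \circ (h \circ \hat f) = h \circ (\tilde f \circ \hat f) = F_h(\tilde f \circ \hat f)$.

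Continuity of $F_h$ and $G_h$ reduces to uniform continuity of $h$ and $h^{-1}$, available because $M$ is compact. The distance $d_0$ has two contributions: left composition with $h$ affects the first term through the modulus of continuity of $h$ and preserves the second exactly, after reparametrizing $x$ by $h^{-1}(x)$ (which is a bijection of $M$). An analogous estimate works for $G_h$, so $F_h$ is a homeomorphism, yielding (i). For (ii), one notes the disjoint decomposition $\mathcal H_f = \bigsqcup_{g \in \mathcal C_f} \mathcal H_{f,g}$, where each fiber is homeomorphic to $Z^0(f)$ by (i); choosing any continuous section $g \mapsto h_g \in \mathcal H_{f,g}$ (whose local existence is guaranteed, in the structurally stable case, by Theorem~\ref{thm:RR}), the map $(g, \tilde f) \mapsto h_g \circ \tilde f$ realizes the desired homeomorphism $\mathcal C_f \times Z^0(f) \to \mathcal H_f$, with inverse $\phi \mapsto (\phi \circ f \circ \phi^{-1},\, h_{\phi \circ f \circ \phi^{-1}}^{-1} \circ \phi)$.

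The main obstacle I anticipate is in part (iii), where discreteness is defined in the uniform sense (there exists $\varepsilon > 0$ separating all distinct pairs). Since $F_h$ is not an isometry in general, discreteness does not transfer for free from the homeomorphism property alone. The key point is that both $h$ and $h^{-1}$ admit a common modulus of continuity by compactness of $M$, so $F_h$ and $G_h$ are not merely continuous but uniformly continuous; they therefore send $\varepsilon$-separated subsets to $\delta(\varepsilon)$-separated subsets for some $\delta(\varepsilon) > 0$, and conversely. This is exactly what is needed to interchange uniform discreteness of $Z^0(f)$ and of $\mathcal H_{f,g}$, completing (iii).
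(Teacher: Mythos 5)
Your argument for the main statement, for (i), and for (iii) is essentially the paper's: the same well-definedness computation, the same explicit inverse $G_h(\phi)=h^{-1}\circ\phi$, the same reduction of continuity of $F_h$ and $F_h^{-1}$ to uniform continuity of $h$ and $h^{-1}$ (including the reparametrization $x\mapsto h^{-1}(x)$ for the second term of $d_0$), and the same one-line verification of the twisted product identity. On (iii) you are in fact more careful than the paper: since discreteness here means uniform $\vep$-separation, the paper's remark that ``homeomorphisms preserve discrete sets'' is not quite the right justification, and your observation that it is the \emph{uniform} continuity of $F_h$ and $G_h$ (sending $\vep$-separated sets to $\delta(\vep)$-separated sets) that transfers the property is exactly the missing precision.

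The one genuine divergence, and the one place where your argument does not go through as stated, is (ii). You build the homeomorphism $\cC_f\times Z^0(f)\to\cH_f$ from a continuous section $g\mapsto h_g\in\cH_{f,g}$, citing Theorem~\ref{thm:RR}. But the theorem you are proving is stated for arbitrary $f\in\text{Diff}^{\,r}(M)$ with $r\ge 0$, and Theorem~\ref{thm:RR} only provides such a section for structurally stable $f$ and only over a $C^1$-neighborhood $\cU$ of $f$, not over all of $\cC_f$; for a general $f$, or for $g\in\cC_f$ far from $f$, no continuous choice of conjugacy is available from that result. The paper argues in the opposite direction: it shows that the projection $F\colon\cH_f\to\cC_f$, $h_g\mapsto g$ (equivalently $\phi\mapsto\phi\circ f\circ\phi^{-1}$), is continuous, with each fiber $F^{-1}(\{g\})=\cH_{f,g}$ homeomorphic to $Z^0(f)$ by part (i), and reads off the product structure from that. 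If you want to keep your section-based formulation you would need to either restrict (ii) to the setting where a global continuous section exists, or replace the appeal to Theorem~\ref{thm:RR} by an argument producing such a section over all of $\cC_f$.
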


Since the map $F_h$ is a homeomorphism then the cardinality and topological properties of
all sets $\cH_{f,g}$ coincide for every $g\in \mathcal C_f$. This motivates the following
definition. Given $r\ge 0$, let $N^0(f) \in \mathbb N \cup\{\infty\}$ denote the minimum number of generators for 
the subgroup $Z^0(f)$. We are interested in studying the regularity of the following function:
$$
\begin{array}{rccc}
N^0  & \colon  \text{Diff}^{\,r}(M) & \to & \mathbb N\cup\{\infty\} \\
	& f & \mapsto & N^0(f)
\end{array}
$$
The following is an immediate consequence of Theorem~\ref{prop:bijection}:

\begin{maincorollary}
$N^0(\cdot)$ is a topological invariant: if $f$ and $g$ are topologically conjugate then $N^0(f)=N^0(g)$.
In particular, $N^0(\cdot)$ is a locally constant function restricted to the space of structurally stable diffeomorphisms.
\end{maincorollary}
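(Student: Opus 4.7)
The plan is to exhibit, for every pair of topologically conjugate diffeomorphisms $f,g$, a topological group isomorphism between $Z^0(f)$ and $Z^0(g)$; once this is in hand, the equality $N^0(f)=N^0(g)$ is immediate, and the local constancy on the space of structurally stable diffeomorphisms follows by combining this with Theorem~\ref{thm:RR}.

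First I would fix a conjugacy $h\in\mathcal{H}_{f,g}$, so that $h\circ f=g\circ h$, and define
$$
\Phi_h\colon Z^0(f)\to Z^0(g),\qquad \Phi_h(\tilde f)=h\circ\tilde f\circ h^{-1}.
$$
A one-line computation using $h\circ f=g\circ h$ shows that $\Phi_h(\tilde f)\circ g=g\circ\Phi_h(\tilde f)$, so $\Phi_h$ indeed lands in $Z^0(g)$; its inverse is $\Phi_{h^{-1}}$, and both maps are clearly continuous group homomorphisms for composition. Hence $\Phi_h$ is a topological group isomorphism, and $N^0(f)=N^0(g)$ follows at once, since the minimum number of generators is a group-theoretic invariant. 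A more conceptual route goes through Theorem~\ref{prop:bijection}: the twisted multiplication $a\ast b:=a\circ h^{-1}\circ b$ on $\mathcal{H}_{f,g}$ turns the map $F_h$ into a group isomorphism onto $(\mathcal{H}_{f,g},\ast)$, while the map $a\mapsto a\circ h^{-1}$ identifies $(\mathcal{H}_{f,g},\ast)$ with $(Z^0(g),\circ)$; composing these identifications yields the same isomorphism $Z^0(f)\cong Z^0(g)$.

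For the second assertion, the plan is to invoke Theorem~\ref{thm:RR} directly: if $f$ is $C^1$-structurally stable then there is a $C^1$-neighborhood $\mathcal{U}$ of $f$ every element of which is topologically conjugate to $f$, so $N^0\equiv N^0(f)$ on $\mathcal{U}$. Because the set of structurally stable diffeomorphisms is itself $C^1$-open, this shows that $N^0(\cdot)$ is locally constant on that set.

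Since Theorem~\ref{prop:bijection} does the real work, I do not foresee a serious obstacle. The only point worth emphasizing is that the homeomorphism $F_h$ from Theorem~\ref{prop:bijection} is not a group homomorphism in itself — the natural group structure on $\mathcal{H}_{f,g}$ only emerges after the twist by $h^{-1}$ — so the cleanest extraction of the isomorphism $Z^0(f)\cong Z^0(g)$ either goes through the direct conjugation map $\Phi_h$ above or through the two-step identification via $(\mathcal{H}_{f,g},\ast)$.
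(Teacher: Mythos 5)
Your proposal is correct and follows essentially the route the paper intends: the paper states this corollary as an immediate consequence of Theorem~\ref{prop:bijection} without written proof, and the conjugation isomorphism $\tilde f\mapsto h\circ\tilde f\circ h^{-1}$ you construct is exactly the map the paper itself uses later (cf.\ item (d) of Corollary~\ref{cor:structuralstab}). Your remark that the bare homeomorphism $F_h$ is not a group homomorphism, so that one must pass to the conjugation map (or twist the multiplication on $\mathcal{H}_{f,g}$) before comparing minimal generating sets, is a correct and worthwhile clarification of a detail the paper glosses over.
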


As one could expect, the minimal cardinality $N^1(\cdot)$ of generators for the $C^1$-centralizer is not a topological
invariant even among Anosov diffeomorphisms. Indeed, since $C^1$-generic Anosov diffeomorphisms have trivial
centralizer then $N^1(f)=1$ for a $C^1$-generic set in $\text{Diff}^1(M)$. On the other hand, there are $C^\infty$-Anosov
diffeomorphisms with a discrete but non-trivial $C^1$-centralizer~\cite{Plykin}. 
If $f$ is structurally stable then Theorem~\ref{prop:bijection} implies that the uniqueness of 
conjugacies $C^0$-close to the identity is equivalent to the $C^0$-centralizer to be discrete. 
This, together with Theorem~\ref{thm:RR}, yields the following consequence.

\begin{maincorollary}\label{cor:structuralstab}
Let $f\in \text{Diff}^{\,1}(M)$ be a structurally stable diffeomorphism. If $Z^0(f)$ is discrete then
there exists $\vep>0$ and an open neighborhood $\mathcal U \subset \text{Diff}^{\,1}(M)$ of $f$ so that:
\begin{itemize}
\item[(a)] for any 
	$g\in \cU $ there exists a \emph{unique} conjugacy $h_g \in \mathcal H_{f,g}$ (that is, $h_g \circ f = g\circ h_g$) 
	which is $\vep$-$C^0$-close to the identity;
\item[(b)] the function $\mathcal U \ni g \mapsto h_g \in \Homeo$ is continuous;
\item[(c)] there exists $K>0$ so that $d_{C^0}(h_g,id) \le K d_{C^0}(g,f)$; and
\item[(d)] for any $g\in \cU$  the map $\Psi_{f,g} \colon   Z^0(f)  \to   Z^0(g)$ given by $\Psi_{f,g}(\tilde f)=h_g^{-1} \circ \tilde f \circ h_g$
is a homeomorphism.
\end{itemize}
\end{maincorollary}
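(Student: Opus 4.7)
The plan is to combine Theorem~\ref{thm:RR} with the group-theoretic description of $\cH_{f,g}$ supplied by Theorem~\ref{prop:bijection}, using the discreteness of $Z^0(f)$ to force rigidity. By discreteness, first fix $\delta>0$ with $d_0(\tilde f,id)>\delta$ for every $\tilde f\in Z^0(f)\setminus\{id\}$, and set $\vep=\delta/5$. Theorem~\ref{thm:RR} provides $K>0$ and a $C^1$-neighborhood $\cU$ of $f$, which I then shrink so that $K\,d_{C^0}(g,f)\le\vep$ for every $g\in\cU$. The resulting conjugacies $h_g\in\cH_{f,g}$ satisfy $d_0(h_g,id)\le\vep$, which already establishes existence in~(a) and the estimate in~(c).

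For uniqueness in~(a), suppose $h,h'\in\cH_{f,g}$ are both $\vep$-close to $id$ in $d_0$. By Theorem~\ref{prop:bijection} applied at $h$, the element $\tilde f=h^{-1}\circ h'$ lies in $Z^0(f)$. Because $d_0$ is symmetric in a homeomorphism and its inverse, $d_0(h,id)\le\vep$ yields $d(h(y),y)\le\vep$ and $d(h^{-1}(y),y)\le\vep$ for every $y$, and similarly for $h'$. Then
\[
d\bigl(h^{-1}(h'(x)),\,x\bigr)\le d\bigl(h^{-1}(h'(x)),\,h'(x)\bigr)+d\bigl(h'(x),\,x\bigr)\le 2\vep,
\]
and an identical bound controls $d({h'}^{-1}(h(x)),x)=d(\tilde f^{-1}(x),x)$, so $d_0(\tilde f,id)\le 4\vep<\delta$. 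By the choice of $\delta$ this forces $\tilde f=id$, whence $h=h'$.

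For~(b), the Robbin--Robinson conjugacy is constructed as the fixed point of a contraction $\cL_g$ depending continuously on $g$ in the $C^1$-topology (as emphasised in the discussion following Theorem~\ref{thm:RR}), which makes $g\mapsto h_g^{\mathrm{RR}}$ continuous on $\cU$; the uniqueness just proved identifies $h_g$ with $h_g^{\mathrm{RR}}$ throughout $\cU$, and continuity of $g\mapsto h_g$ follows. For~(d), a direct calculation using $h_g\circ f=g\circ h_g$ together with $\tilde f\circ f=f\circ\tilde f$ shows that $h_g\circ\tilde f\circ h_g^{-1}$ commutes with $g$ for every $\tilde f\in Z^0(f)$; hence conjugation by $h_g$ (or, equivalently, by $h_g^{-1}$ in the direction of the statement) carries $Z^0(f)$ bijectively onto $Z^0(g)$, with an inverse of the same form, and both maps are $d_0$-continuous, so $\Psi_{f,g}$ is a homeomorphism.

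The only step that requires any real work is the quantitative uniqueness estimate in~(a); it goes through cleanly precisely because $d_0$ is symmetric in $h$ and $h^{-1}$, so no modulus-of-continuity control on $h_g$ beyond the closeness bound supplied by Theorem~\ref{thm:RR} is needed to control $\tilde f=h^{-1}\circ h'$.
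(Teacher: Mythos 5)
Your proposal is correct and follows essentially the same route the paper intends: Theorem~\ref{thm:RR} supplies existence, the estimate in (c), and continuity in (b), while discreteness of $Z^0(f)$ combined with the homeomorphism $F_h$ of Theorem~\ref{prop:bijection} forces uniqueness, with your $4\vep<\delta$ bound on $d_0(h^{-1}\circ h',id)$ doing exactly the work needed. Your parenthetical remark on (d) is also apt: with the convention $h_g\circ f=g\circ h_g$ it is conjugation $\tilde f\mapsto h_g\circ\tilde f\circ h_g^{-1}$ that carries $Z^0(f)$ onto $Z^0(g)$, so the formula as printed in the statement has the factors in the reverse order.
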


The continuous dependence of the conjugacy map $h_g$ on the diffeomorphism $g$ follows from the work of Robbin 
and Robinson~\cite{Robbin72, Robinson}. 
In what follows we relate expansiveness to $C^0$-centralizers for 
structurally stable diffeomorphisms. Our starting point is the following result due to Walters \cite{Wa70}:

\begin{lemma}
\label{thm:C0-homoclinic}
Take  $r\ge 0$. Assume $f \in \text{Diff}^{\,r}(M)$ has an $f$-invariant subset  $\Lambda$
for which $f\mid_\Lambda$ is expansive. For every $0\le k \le  r$, the subgroup
 $Z^k(f \mid_{ \Lambda } )$ of $\text{Homeo}(\Lambda)$ is discrete.
In particular, $Z^0(f \mid_{ \Omega(f)})$ is discrete for every $C^r$ Axiom A diffeomorphism and $1\le r\le \infty$. 
\end{lemma}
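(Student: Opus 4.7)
The argument is the classical one due to Walters, based on the observation that two centralizer elements which are close enough in the $d_0$-metric must coincide, by expansiveness. Let $\vep>0$ be an expansivity constant for $f\mid_\Lambda$, and take any $h_1,h_2\in Z^0(f\mid_\Lambda)$ with $d_0(h_1,h_2)<\vep$. Set
$$
g := h_2^{-1}\circ h_1\in Z^0(f\mid_\Lambda),
$$
which is well-defined because $Z^0(f\mid_\Lambda)$ is a subgroup of $\mathrm{Homeo}(\Lambda)$. My first step is to show that $g$ is $\vep$-close to the identity on $\Lambda$: writing $y=h_1(x)$ for $x\in\Lambda$, one has
$$
d(g(x),x)=d(h_2^{-1}(y),h_1^{-1}(y))\le \sup_{y\in\Lambda} d(h_1^{-1}(y),h_2^{-1}(y)) \le d_0(h_1,h_2)<\vep.
$$

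The second step is to iterate this estimate using commutation with $f$. Since $g$ lies in the centralizer of $f\mid_\Lambda$, $g\circ f^n = f^n\circ g$ for every $n\in\mathbb Z$, and therefore, for every $x\in\Lambda$ and every $n\in\mathbb Z$,
$$
d\bigl(f^n(x),f^n(g(x))\bigr)=d\bigl(f^n(x),g(f^n(x))\bigr)\le \sup_{y\in\Lambda} d(y,g(y))<\vep.
$$
By $\vep$-expansivity of $f\mid_\Lambda$ this forces $g(x)=x$ for every $x\in\Lambda$, hence $h_1=h_2$. This proves that $Z^0(f\mid_\Lambda)$ is $\vep$-discrete in $(\mathrm{Homeo}(\Lambda),d_0)$; since for every $0\le k\le r$ the inclusion $Z^k(f\mid_\Lambda)\subset Z^0(f\mid_\Lambda)$ holds and the $C^k$-topology refines $d_0$, discreteness of $Z^k(f\mid_\Lambda)$ follows at once.

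For the ``in particular'' statement, I would recall the standard fact that a uniformly hyperbolic invariant set is expansive (any $\vep$ smaller than the size of the local product structure works as an expansivity constant, since two distinct orbits whose past and future stay $\vep$-close would have to lie in the intersection of a local stable and a local unstable manifold, which is a single point). Since by definition $\Omega(f)$ is uniformly hyperbolic for any $C^r$ Axiom A diffeomorphism, the first part applied with $\Lambda=\Omega(f)$ and $k=0$ gives the claim.

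The argument has no serious obstacle; the only mild care needed is in the opening estimate, where one must use the symmetric nature of $d_0$ (controlling both $h_i$ and $h_i^{-1}$) in order to bound $\sup_x d(g(x),x)$ purely by $d_0(h_1,h_2)$.
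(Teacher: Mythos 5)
Your proof is correct and follows essentially the same route as the paper: reduce to showing that a centralizer element $d_0$-close to the identity must equal the identity (you make the subgroup reduction explicit via $g=h_2^{-1}\circ h_1$, the paper invokes the subgroup structure directly), then use commutation with $f$ to propagate the closeness to all iterates and conclude by expansiveness, with the \emph{in particular} part following from expansiveness of hyperbolic sets. No gaps.
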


We observe that taking $f$ restricted to the non-wandering set ${\Omega(f)}$ above is necessary
(see Example~\ref{ex:N-S}) and that one cannot expect triviality of $C^0$-centralizers even for
Anosov diffeomorphisms~\cite{Ro05}.

\subsection{Densely expansive structurally stable diffeomorphisms}

We get back to expansiveness and structural stability.  
We start with the following characterization of structurally stable diffeomorphisms with discrete $C^0$-centralizer.

\begin{maintheorem}\label{abstractthm}
If $f\in \text{Diff}^{\,1}(M)$ is a structurally stable diffeomorphism then the following are equivalent:
\begin{itemize}
\item[(i)] $Z^0(f)$ is discrete;
\item[(ii)] for any $g\in \cC_f$ there exists a unique conjugacy $h_g\in \Homeo$ that is $C^0$-close to the identity;
\item[(iii)] if $\Lambda_{i_s} \succ \dots \succ \Lambda_{i_2} \succ \Lambda_{i_1}$ is a maximal ordered chain of basic
	pieces in $\Omega(f)$ then $\dim W^u(\Lambda_{i_j})= \dim W^u(\Lambda_{i_1})$ is constant for every $1\le j \le s$;
	and 
\item[(iv)] $f$ is an Anosov diffeomorphism.
\end{itemize}
\end{maintheorem}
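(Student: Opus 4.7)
The plan is to establish the equivalences via the implications
\[
\text{(iv)} \Rightarrow \text{(i)} \Leftrightarrow \text{(ii)} \Rightarrow \text{(iv)}
\qquad\text{and}\qquad
\text{(iv)} \Rightarrow \text{(iii)} \Rightarrow \text{(iv)},
\]
using Theorem~\ref{prop:bijection}, Theorem~\ref{thm:RR} and Lemma~\ref{thm:C0-homoclinic} as the main tools. The implication $(iv)\Rightarrow(i)$ is immediate because an Anosov diffeomorphism is uniformly expansive on $M$, so Lemma~\ref{thm:C0-homoclinic} applied with $\Lambda=M$ gives that $Z^0(f)$ is discrete. For $(i)\Leftrightarrow(ii)$ I invoke Theorem~\ref{prop:bijection}(iii), which transports discreteness between $Z^0(f)$ and each $\cH_{f,g}$; combined with the Robbin-Robinson conjugacy produced by Theorem~\ref{thm:RR}, discreteness of $Z^0(f)$ is equivalent to the uniqueness of a conjugacy in a $C^0$-neighborhood of the identity. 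Finally, $(iv)\Rightarrow(iii)$ is trivial: an Anosov diffeomorphism on a connected manifold has $\Omega(f)=M$ as a single basic set, so every maximal chain is trivial.

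For $(iii)\Rightarrow(iv)$ I argue by contrapositive. Suppose $\Omega(f)\subsetneq M$. Every maximal chain has a repeller at its top, because the absence of a dominator together with transitivity of $\succ$ and the identity $W^s(\Lambda_{i_s}) \cap W^u(\Lambda_{i_s}) = \Lambda_{i_s}$ forces $W^s(\Lambda_{i_s})=\Lambda_{i_s}$; dually, the bottom of the chain is an attractor. A basic piece that is simultaneously attractor and repeller satisfies $\Lambda = W^s(\Lambda) \cap W^u(\Lambda)$ with both factors open, so $\Lambda$ is clopen in $M$, and connectedness forces $\Lambda=M$, which is excluded. Hence every maximal chain has length at least two with distinct top $\Lambda^r$ and bottom $\Lambda^a$; since $W^u(\Lambda^r)$ is open in $M$ we have $\dim W^u(\Lambda^r)=\dim M$, whereas $W^u(\Lambda^a)=\Lambda^a$ is a proper closed subset of $M$ of strictly smaller dimension, contradicting (iii).

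The substantive step is $(i)\Rightarrow(iv)$, once more by contrapositive. With $\Lambda^a$ and $\Lambda^r$ as above, the heteroclinic region $U:=W^u(\Lambda^r)\cap W^s(\Lambda^a)$ is an open, $f$-invariant subset of $M\setminus \Omega(f)$ on which $f$ acts freely and cocompactly, so the quotient $Q:=U/\langle f\rangle$ is a compact topological manifold of dimension $\dim M$. Any small continuous isotopy $\{\bar h_t\}_{t\in(-\vep,\vep)}\subset\mathrm{Homeo}(Q)$ with $\bar h_0=\mathrm{id}$ lifts to an isotopy $\{\tilde h_t\}$ of $U$ commuting with $f$, and extending each $\tilde h_t$ by the identity on $M\setminus U$ produces a continuous family $\{h_t\}\subset Z^0(f)$ with $h_0=\mathrm{id}$, $h_t\neq\mathrm{id}$ for $t\neq 0$, and $d_0(h_t,\mathrm{id})\to 0$ as $t\to 0$, which shows that $Z^0(f)$ is not discrete.

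The main obstacle I anticipate is verifying continuity of $h_t$ across $\partial U\subset \Lambda^a\cup\Lambda^r$. For $y=f^n(x)$ with $x$ in a compact fundamental domain of $U$, the displacement $d(\tilde h_t(y),y)=d(f^n\tilde h_t(x),f^n x)$ can a priori be amplified in the unstable direction by a factor of order $\lambda^{-n}$ as $y\to \Lambda^a$ (or, symmetrically, $\Lambda^r$). The remedy is to localize the support of the base isotopy so that the induced displacement on each lifted orbit lies in the strong-stable cone of $\Lambda^a$ near the iterates accumulating on $\Lambda^a$ and in the strong-unstable cone of $\Lambda^r$ near those accumulating on $\Lambda^r$; uniform hyperbolicity then forces the displacement to shrink exponentially and yields continuity of $h_t$ on $M$.
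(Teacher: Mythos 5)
Your overall skeleton (using Theorem~\ref{prop:bijection} for $(i)\Leftrightarrow(ii)$ and Lemma~\ref{thm:C0-homoclinic} for $(iv)\Rightarrow(i)$) matches the paper, but the two implications that carry the real content, $(iii)\Rightarrow(iv)$ and $(i)\Rightarrow(iv)$, both break down on the same point. In $(iii)\Rightarrow(iv)$ you assert that the repeller $\Lambda^r$ at the top of a maximal chain satisfies $\dim W^u(\Lambda^r)=\dim M$ because $W^u(\Lambda^r)$ is open. This confuses the topological dimension of the set $W^u(\Lambda^r)=\bigcup_{x\in\Lambda^r}W^u(x)$ (which is indeed open and full-dimensional) with the dimension of the individual unstable leaves, which is what $\dim W^u(\cdot)$ means in item (iii). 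These need not agree: in the paper's Example~\ref{ex:2} the repeller $\{N\}\times\mathbb T^2\subset\mathbb S^1\times\mathbb T^2$ has unstable index $2$ in a $3$-manifold. Strong transversality only gives $\dim W^u(\Lambda_{i_s})\ge\dim W^u(\Lambda_{i_1})$, which is perfectly consistent with (iii), so no contradiction is reached. Worse, your route amounts to proving that (iii) plus structural stability forces $\Omega(f)=M$; since every Anosov diffeomorphism satisfies (iii), this would settle the open problem of whether Anosov diffeomorphisms have $\Omega(f)=M$ --- a strong sign the argument cannot be patched at this level. The paper instead shows that (iii) makes $f$ expansive on all of $M$ (complementary-dimensional transverse intersections of leaves are locally singletons, and every point lies on a stable and an unstable leaf of $\Omega(f)$) and then invokes Ma\~n\'e's theorem \cite{Ma75} that an expansive structurally stable diffeomorphism is Anosov. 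That external input, or an equivalent, is absent from your proof. (Your $(iv)\Rightarrow(iii)$ also leans on ``Anosov implies $\Omega(f)=M$'', which is the same open problem; the correct trivial justification is that $\dim E^u$ is locally constant, hence constant on a connected manifold.)

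The same missing point undermines your $(i)\Rightarrow(iv)$. For the extension of the lifted isotopy by the identity to be continuous at $\Lambda^a\cup\Lambda^r$, the displacement of each point of the fundamental domain must lie \emph{along the stable leaf} of $\Lambda^a$ through that point (so forward iterates contract it) \emph{and along the unstable leaf} of $\Lambda^r$ (so backward iterates contract it); a cone condition is not enough, since a displacement in the stable cone that is not along the leaf has an unstable component that is expanded forward. The set of admissible directions is $\mathcal F^u(x)\cap\mathcal F^s(x)$, whose dimension is $\dim W^u(\Lambda^r)-\dim W^u(\Lambda^a)$: a nontrivial direction to push along exists precisely when the unstable index jumps along the chain, i.e.\ when (iii) fails. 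When the indices agree this intersection is locally a point, there is no room to perturb, and indeed $f$ is expansive with discrete $Z^0(f)$. So your construction, made rigorous, proves $\neg(iii)\Rightarrow\neg(i)$ --- which is exactly the paper's $(i)\Rightarrow(iii)$, carried out there by pushing along one-dimensional curves $\alpha_x\subset\mathcal F^u(x)\cap\mathcal F^s(x)$ --- and you are still left needing $(iii)\Rightarrow(iv)$, i.e.\ Ma\~n\'e's theorem. A further, repairable, issue: $U=W^u(\Lambda^r)\cap W^s(\Lambda^a)$ need not be cocompact for $\langle f\rangle$ when the chain has intermediate saddle pieces (orbits lingering near a middle basic set escape to infinity in the quotient), so one must work with a small wandering box whose orbit closure meets only $\Lambda^a$ and $\Lambda^r$, as the paper does.
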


Ma\~n\'e \cite{Ma75} 
proved that an expansive structurally stable diffeomorphism 
is Anosov. 
The later result will be used to deduce that Ma\~n\'e's result in \cite{Ma75} is optimal: 
there are structurally stable diffeomorphisms with a dense subset of points of expansiveness which are not Anosov 
(cf. Example~\ref{ex:2}).
Theorems~\ref{prop:expansive.points} and ~\ref{abstractthm} together yield the 
following immediate consequence:

\begin{maincorollary}\label{cord}
Assume $f\in \text{Diff}^{\,1}(M)$ is a structurally stable diffeomorphism so 
that the basins of trivial attractors and trivial repellers do not intersect and 
that there exists a maximal totally ordered chain
$\Lambda_{i_s} \succ \dots \succ \Lambda_{i_2} \succ \Lambda_{i_1}$ of basic pieces in $\Omega(f)$ satisfying
$\dim W^u(\Lambda_{i_s})> \dim W^u(\Lambda_{i_1})$. Then 
\begin{itemize}
\item[(a)] $Z^0(f)$ is not discrete,
\item[(b)] $f$ has sensitivity to initial conditions, 
\item[(c)] $f$ admits a dense set of expansiveness. 
\end{itemize}
\end{maincorollary}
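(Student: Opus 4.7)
The plan is to deduce the three conclusions as immediate consequences of the equivalences already established in Theorems~\ref{prop:expansive.points} and~\ref{abstractthm}, with essentially no further work beyond matching hypotheses to conditions.

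First I would dispatch items (b) and (c). The standing assumption that the topological basins of trivial attractors and trivial repellers in $\Omega(f)$ do not intersect is verbatim condition (1) of Theorem~\ref{prop:expansive.points}. Since $f$ is structurally stable, that theorem applies and its equivalence (1) $\Leftrightarrow$ (2) $\Leftrightarrow$ (3) gives at once that $f$ has sensitivity to initial conditions, yielding (b), and that $f$ is densely expansive, i.e.\ admits an $\vep$-expansive dense subset $D\subset M$, yielding (c).

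Next I would obtain (a) via Theorem~\ref{abstractthm}. Condition (iii) of that theorem requires that along \emph{every} maximal totally ordered chain $\Lambda_{i_s} \succ \dots \succ \Lambda_{i_1}$ in $\Omega(f)$ the unstable dimension $\dim W^u(\Lambda_{i_j})$ be constant. Our hypothesis exhibits one such maximal chain on which $\dim W^u(\Lambda_{i_s}) > \dim W^u(\Lambda_{i_1})$; in particular the dimension is not constant along it, so (iii) fails. By the equivalence (i) $\Leftrightarrow$ (iii) in Theorem~\ref{abstractthm}, condition (i) must also fail, that is, $Z^0(f)$ is not a discrete subgroup of $\Homeo$. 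This is exactly (a).

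Since the corollary is purely a bookkeeping reformulation of the main theorems, I do not anticipate any substantive obstacle. The only thing to be careful about is that the two hypotheses of the corollary are applied to the correct equivalences — the topological condition on basins feeds into Theorem~\ref{prop:expansive.points} and the combinatorial condition on the partial order $\succ$ feeds into Theorem~\ref{abstractthm} — and that one does not conflate (iii) of Theorem~\ref{abstractthm} (which is a statement about every maximal chain) with its negation (existence of one maximal chain on which constancy fails), which is precisely what the hypothesis provides.
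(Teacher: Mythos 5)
Your proposal is correct and coincides with the paper's own (implicit) argument: the paper simply states that the corollary is an immediate consequence of Theorems~\ref{prop:expansive.points} and~\ref{abstractthm}, exactly as you derive it. Items (b) and (c) come from the equivalence $(1)\Leftrightarrow(2)\Leftrightarrow(3)$ of Theorem~\ref{prop:expansive.points}, and item (a) from the failure of condition (iii) in Theorem~\ref{abstractthm} together with the equivalence $(i)\Leftrightarrow(iii)$.
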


The topological description of the foliations and attractors for Axiom A diffeomorphisms is a hard topic not
yet completely understood. Nevertheless, structurally stable diffeomorphisms that satisfy the assumptions
of Corollary~\ref{cord} include: 
(i) structurally stable diffeomorphisms of a compact orientable surface that contains a one-dimensional 
basic set (because these  always admit also periodic sources or sinks (cf.~\cite{Plykin74,Gri})), and 
(ii) structurally stable diffeomorphisms on compact manifolds of dimension larger or equal to $3$ with  
a codimension one orientable expanding attractor (since these have at least one periodic repeller and all remaining 
basic pieces in the spectral decomposition are periodic points as proved in~\cite{GZ}).

\section{Dense expansiveness for structurally stable diffeomorphisms}\label{sec:dense}

This section is devoted to the proof of Theorem~\ref{prop:expansive.points}. Let $f$ be a structurally stable diffeomorphism.
We first recall the dynamics of $f$ on wandering points.

\subsection{Points traveling through filtration elements}\label{travel}
Since $f$ is Axiom A, by the spectral decomposition, the non-wandering set 
is the union of transitive hyperbolic basic pieces: $\Omega(f)=\bigcup_{i=1}^k \Lambda_i$.
We denote by $\cA(f)\subset \Omega(f)$ the set of attractors for $f$.
The strong transversality condition implies that $W^s(x)$ is transversal to $W^u(y)$ for every 
$x,y \in \Omega(f)$ which implies, if these have non-empty intersection, that $\dim E^s_x+ \dim E^u_y \ge \dim M$.
By the no cycles condition, hence existence of a filtration adapted to $\Omega(f)$, 
any maximal totally ordered chain  $\Lambda_{i_s} \succ \dots \succ \Lambda_{i_2} \succ \Lambda_{i_1}$
has at most $k$ elements, the basic set $\Lambda_{i_s}$ is a repeller, the set $\Lambda_{i_1}$ is an attractor, 
and the remaining elements $\Lambda_{i_j}$ are of saddle type (see e.g.\cite{Shub}). 

\begin{remark}\label{rmk:perconn}
For any chain $\Lambda_{i_s} \succ \dots \succ \Lambda_{i_2} \succ \Lambda_{i_1}$  there are points $x_{i_{j}} \in \Lambda_{i_j}$ so that $W^u(x_{i_{j+1}}) \pitchfork W^s(x_{i_{j}}) \neq \emptyset $ for all $j=1\dots s-1$. Since periodic points are dense in $\Omega(f)$, invariant manifolds vary continuously on compact parts and transversality in compact parts is an open condition, we may assume the points $x_{i_j}$ to be periodic.
\end{remark}

The following instrumental lemma allow us to describe the set of points that accumulate, by backward orbits, on the basic sets of
saddle type.  First we need to introduce some notation. Given an attractor $\Lambda$, let $\mathfrak C(\Lambda)$ denote the set of all
maximal and totally ordered chains $\Lambda_{i_s} \succ \dots \succ \Lambda_{i_2} \succ \Lambda_{i_1}$ of basic pieces 
in the non-wandering set $\Omega(f)$ such that $\Lambda_{i_1}=\Lambda$. Observe that the set $\Lambda_{i_s}$ is a repeller.

\begin{lemma}\label{le:non-connections}
Let $f\in \text{Diff}^{\,1}(M)$ be a structurally stable diffeomorphism and let $\Lambda\subset \Omega(f)$ be an attractor for $f$. 
Then for any periodic point $p\in \Lambda$, 
\begin{equation}\label{eq:D}
W^s(p) \setminus \Big[ \bigcup_{\Lambda_{i_s} \succ \dots \succ \Lambda_{i_2} \succ \Lambda_{i_1} \in \mathfrak C(\Lambda)} 
		\; \bigcup_{2\le j \le s-1} W^u(\Lambda_{i_j}) \Big]
\end{equation}
forms an $f$-invariant open and dense subset of $W^s(p)$.
In particular, there exists an open and dense subset of points in $W^s(p)$ whose backward orbit accumulates in some
of the repellers heteroclinically related to $\Lambda$. 
\end{lemma}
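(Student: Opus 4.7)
The plan is to reduce the claim to a Baire-type argument driven by the strong transversality condition and the finiteness of the spectral decomposition. First I would reformulate the excluded set: by the spectral decomposition $\Omega(f)=\bigsqcup_{i=1}^{k}\Lambda_i$ and the no-cycles condition forced by Axiom A with strong transversality, the family $\mathfrak{C}(\Lambda)$ is finite, and every saddle $\Lambda_{k'}$ with $\Lambda_{k'}\succ\Lambda$ extends, by iterated choice of predecessors in $\succ$, to a maximal chain in $\mathfrak{C}(\Lambda)$ ending at a repeller. Hence the excluded set equals
\[
E \;:=\; \bigcup_{\Lambda_{k'}\in\cS} W^u(\Lambda_{k'}), \qquad \cS:=\{\Lambda_{k'}\text{ saddle} : \Lambda_{k'}\succ\Lambda\},
\]
and $\cS$ is finite. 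Invariance is immediate: each $W^u(\Lambda_{k'})$ is $f$-invariant, so is $E$, and since $W^s(p)$ is $f^m$-invariant for $m$ the period of $p$, the set $W^s(p)\setminus E$ is $f^m$-invariant; its orbit saturate $\bigcup_{i=0}^{m-1}f^{i}(W^s(p)\setminus E)\subset W^s(\cO(p))$ is then genuinely $f$-invariant.

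For density, the strong transversality condition gives $W^u(\Lambda_{k'})\pitchfork W^s(p)$, so each $W^u(\Lambda_{k'})\cap W^s(p)$ is an immersed submanifold of $W^s(p)$ of codimension $\dim M-\dim E^u_{\Lambda_{k'}}>0$ (positive because $\Lambda_{k'}$ being a saddle forces $\dim E^u_{\Lambda_{k'}}<\dim M$), hence nowhere dense; as $\cS$ is finite, $E\cap W^s(p)$ is a finite union of nowhere dense sets, still nowhere dense, so the complement is dense. For openness I would invoke the Axiom A closure identity $\overline{W^u(\Lambda_{k'})}=\bigcup_{\Lambda_{k'}\succeq\Lambda_{k''}} W^u(\Lambda_{k''})$, which follows from the inclination lemma together with the filtration adapted to $\Omega(f)$. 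Any limit point of $E$ in $W^s(p)$ must then lie in $W^u(\Lambda_{k''})\cap W^s(p)$ for some $\Lambda_{k''}\preceq\Lambda_{k'}\in\cS$, and the order restrictions force either $\Lambda_{k''}\in\cS$ (so the limit is already in $E$) or $\Lambda_{k''}=\Lambda$. The latter exceptional limits lie in the closed, strictly lower-dimensional stable slice $\Lambda\cap W^s(p)$ (a single point for a trivial sink; a Cantor-type slice for a non-trivial attractor), and removing this nowhere-dense slice yields that $W^s(p)\setminus(E\cup\Lambda)$ is open and dense in $W^s(p)$, coinciding with $W^s(p)\setminus E$ up to a set of empty interior.

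The ``in particular'' conclusion then follows from compactness of $M$: for any $x$ in the open dense subset just produced with $x\notin\Lambda$, the $\alpha$-limit set of $x$ is a nonempty $f$-invariant subset of $\Omega(f)$, disjoint from every $\Lambda_{k'}\in\cS$ (else $x\in W^u(\Lambda_{k'})\subset E$ by hyperbolicity) and from $\Lambda$ (since $x\notin\Lambda$); hence it is contained in the repellers, and any such repeller lies above some $\Lambda_{k'}\in\cS$ in the order $\succ$, so it is heteroclinically related to $\Lambda$ via a chain in $\mathfrak{C}(\Lambda)$. The main obstacle I anticipate is the closure-structure step, where one must carefully track, through the inclination lemma and the no-cycles filtration, precisely which basic pieces appear as accumulations of the unstable manifolds of saddles in $\cS$, and isolate the stable slice $\Lambda\cap W^s(p)$ as the only source of failure of literal openness of $W^s(p)\setminus E$.
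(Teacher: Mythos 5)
Your reduction of the excluded set to $E=\bigcup_{\Lambda_{k'}\in\cS}W^u(\Lambda_{k'})$, with $\cS$ the saddles above $\Lambda$, is correct, but the density step contains a genuine gap. For a \emph{non-trivial} saddle piece $\Lambda_{k'}$ the set $W^u(\Lambda_{k'})$ is not an immersed submanifold: it is a lamination, an uncountable union of $u'$-dimensional leaves indexed by the (typically Cantor-like) basic set $\Lambda_{k'}$. The fact that each individual leaf meets $W^s(p)$ transversally in a slice of positive codimension $s'=\dim M-u'$ does not imply that the union $W^u(\Lambda_{k'})\cap W^s(p)$ is nowhere dense --- an uncountable union of positive-codimension slices can perfectly well fill an open set (think of a foliated chart). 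To close this you would need either the local product structure of $\Lambda_{k'}$ together with the fact that a saddle piece meets its local stable discs in a set with empty interior (plus a dimension-theoretic argument for the fibred set), or, as the paper does, a direct argument: using Remark~\ref{rmk:perconn} and the $\lambda$-lemma applied recursively along a chain, every point of $W^s(p)$ is accumulated by transverse intersections of $W^s(p)$ with unstable manifolds of periodic points in the repellers, and such points already lie in the set \eqref{eq:D}. The paper's route both repairs the density claim and delivers the ``in particular'' clause at once, whereas your Baire-type argument must recover it afterwards through the $\alpha$-limit discussion.

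Two further remarks. Your observation that $W^s(p)\setminus E$ fails to be literally open along the slice $\Lambda\cap W^s(p)$ is correct and sharp: for a sink $p$ fed by a saddle $q$, the branch of $W^u(q)$ falling into $p$ accumulates on $p$, which belongs to the set \eqref{eq:D}, so that set is not open at $p$. The paper's own openness argument (push a point backwards into the open basin of a repeller and pull back a neighbourhood) silently applies only to points of $D\setminus\Lambda$, which is exactly the set you end up working with; on this point your proposal is more careful than the source, at the cost of proving a slightly modified statement. On the other hand, the closure identity $\overline{W^u(\Lambda_{k'})}=\bigcup_{\Lambda_{k''}\preceq\Lambda_{k'}}W^u(\Lambda_{k''})$ on which your openness step rests is itself a non-trivial consequence of the $\lambda$-lemma and the filtration, of depth comparable to the lemma being proved, and the paper avoids invoking it. Your remark distinguishing $f$- from $f^m$-invariance of $W^s(p)$ is a fair, minor, correction.
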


\begin{proof}
Since $f\in \text{Diff}^{\,1}(M)$ is structurally stable, the basic pieces in $\Omega(f)$ are localy maximal and 
\begin{equation}\label{eq:M}
M=\bigcup_{x \in \Omega(f)} W^s(x) \; = \; \bigcup_{x \in \Omega(f)} W^u(x).
\end{equation}
Let $\Lambda\subset \Omega(f)$ be an attractor, $p$ be a periodic point in $\Lambda$ and let $D$ be the set defined
by \eqref{eq:D}. 
We claim that $D$ is dense in $W^s(p)$.  Given $x\in W^s(p)$, by \eqref{eq:M} and the strong transversality condition,  
there exists $p_x\in \Omega(f)$ so that $x\in W^u(p_x) \pitchfork W^s(p)$ (see Figure~\ref{fig:heteroclinic} below). 

\begin{figure}[htb]
 \begin{center}
  \includegraphics[width=6cm]{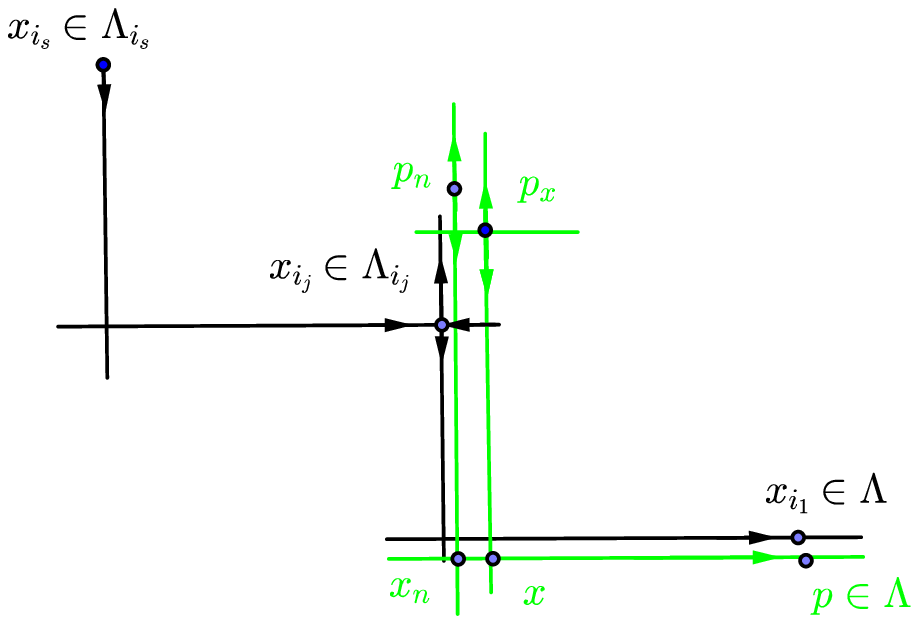}
    \includegraphics[width=7cm]{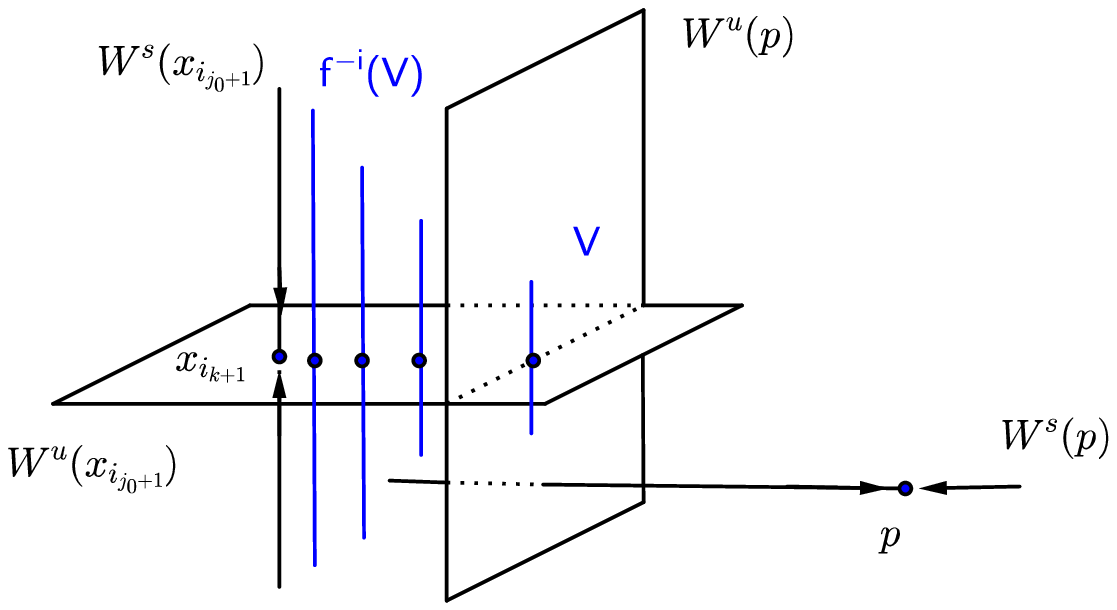}
 \end{center}
 \caption{Heteroclinic intersections associated to the basic pieces (on the left) 
 and selection of a disk $V \subset W^u(x_{i_{k}})$ accumulating on $W^s(x_{i_{k+1}})$ by the $\lambda$-lemma (on the right).}
 \label{fig:heteroclinic}
\end{figure}

Let $\Lambda_{i_s} \succ \dots \succ \Lambda_{i_2} \succ \Lambda_{i_1} \in \mathfrak C(\Lambda)$, $2\le j \le s$ be a maximal chain 
containing the basic set $\Lambda_{i_{j_0}}$ that contains $p_x$. 
If ${j_0}=s$ then $d(f^{-k}(x), \Lambda_{i_s}) \to 0$ as $k\to\infty$, and so $x\in D$ and we are done.
Otherwise $2\le {j_0} <s$ and, by the denseness of periodic points in the non-wandering set and continuity of compact parts of stable and unstable manifolds,  one can take periodic points $p_n \in \Lambda_{i_{j_0}}$ so that
$p_n \to p_x$, and $x$ is approximated by heteroclinic points $x_n \in W^u(p_n) \pitchfork W^s(p)$.
By Remark~\ref{rmk:perconn} there are periodic points $x_{i_{k}} \in \Lambda_{i_k}$ so that 
$W^u(x_{i_{k+1}}) \pitchfork W^s(x_{i_{k}})$ for all $k={j_0}\dots s-1$.
By the $\lambda$-lemma (see e.g. \cite{Shub}), there exists a disk 
$V\subset W^s(p)$ of dimension equal to the stable index of $\Lambda_{i_{{j_0}+1}}$ 
whose iterates by $f^{-1}$ accumulate  (in the $C^1$-topology) on a compact part of 
$W^s(x_{i_{{j_0}+1}})$ (see Figure~\ref{fig:heteroclinic}).
A recursive argument assures that $x$ is accumulated by heteroclinic intersections between the stable manifold of $p$ 
and unstable manifolds of periodic points in $\Lambda_{i_s}$. Since these homoclinic intersections belong to $D$, 
this proves that $D$ is dense in $W^s(p)$.

We are left to prove that $D$ is open in $W^s(p)$. 
 Given $x\in D$ arbitrary, let $n_0\ge 1$ be so that $f^{-n_0}(x)$ belongs to the immediate topological basin
 the repeller. Since the later is an open set, by continuity of $f^{-1}$, there exists an open neighborhood $V$ of $x$ 
 so that $f^{-n_0}(V)$ is also contained in the topological basin of the repeller. 
 This implies that $D$ is an open subset of $W^s(p)$ and finishes the proof of the lemma.
\end{proof}

\begin{remark}\label{rrmk:epellers}
This result asserts that `most' wandering points  travel along the elements of the filtration and converge 
by negative iteration to some of the repellers. Clearly, a similar statement holds for the repellers of $f$ just by 
considering the previous lemma for the diffeomorphism $f^{-1}$.
\end{remark}

\subsection{Proof of Theorem~\ref{prop:expansive.points}} 

Let $f\in \text{Diff}^{\,1}(M)$ be a $C^1$-structurally stable diffeomorphism.

\smallskip 
\noindent \emph{First part: (1) $\Rightarrow$ (2).}
Assume that the topological basins of trivial attractors and repellers in $\Omega(f)$ do not intersect. 
Let
$\Omega(f)=\overline{Per(f)}=\bigcup_{i=1}^k \Lambda_i$ be the spectral decomposition for $\Omega(f)$, 
where each $\Lambda_i$ is the homoclinic class associated to some periodic point 
$p_{i} \in \text{Per}(f) \cap \Lambda_i$. 
Since all periodic points of $\Lambda_i$ are heteroclinically related then: 
\begin{itemize}
\item[(i)] $\Lambda_i=H(p_i)$ (hence $W^s(\mathcal O (p_i))$ is dense in $\Lambda_i$) for every $i=1\dots k$; 
\item[(ii)] $W^s(\Lambda_i):=\{x\in M : d(f^n(x),\Lambda_i) \to 0 \text{  as  } n \to\infty \}$ is an open subset of $M$ 
for every $\Lambda_i\in \cA(f)$, and
\item[(iii)] $W^s(\mathcal O(p_i))$ is dense in $W^s(\Lambda_i)$.
\end{itemize}
Moreover, if $\vep_i>0$ is given by the expansiveness property for $f\mid_{\Lambda_i}$ then we conclude that 
$f\mid_{\Omega(f)}$ is $\vep$-expansive, for $\vep=\min \{ \vep_i : 1\le i \le k\} >0$. 
We need the following useful selection of periodic points.

\medskip \noindent {\bf Claim~1:} 
\emph{There exists a finite set 
$\Theta = \Theta_a \cup \Theta_r $ of periodic points, 
$$
\Theta_a \subset \bigcup\limits_{\Lambda \in \cA(f)} \Lambda 
\quad\text{and}\quad
\Theta_r \subset \bigcup\limits_{\Lambda \in \cA(f^{-1})} \Lambda,
$$
one of which possibly empty,
such that 
\begin{enumerate} 
\item $\big[\bigcup_{p \in \Theta_a} W^s(p)\big] \cup \big[\bigcup_{p\in \Theta_r} W^u(p)\big]$ is dense in $M$,
\item $int(\overline{W^s(\mathcal O(p))}) \cap int(\overline{W^u(\mathcal O(q))})=\emptyset$ for every ${p \in \Theta_a}$ and ${q \in \Theta_r} $,
\item if $\Theta_a\neq \emptyset$ then: 
	\begin{enumerate}
	\item for $p\in \Theta_a$, $W^s(\mathcal O(p))$ is dense in $W^s(H(p,f))$ and its closure 
	contains an 
	open set in the basin of some non-trivial repeller, 
	\item the union of topological  basins of the non-trivial repellers contains an open and dense subset of the 
	stable manifolds $\bigcup_{p\in \Theta_a} W^s(p)$, 
	\end{enumerate}
\item if $\Theta_r\neq \emptyset$ then:
	\begin{enumerate}
	\item for $p\in \Theta_r$, $W^u(\mathcal O(p))$ is dense in $W^u(H(p,f))$ and its closure  
	contains an open set in the basin of some non-trivial 
	attractor,
	\item the union of the topological basin of non-trivial attractors contains an open and dense subset of the unstable
	 manifolds  $\bigcup_{p\in \Theta_r} W^u(p)$.
	\end{enumerate}
\end{enumerate}
}
\medskip

\begin{proof}[Proof of Claim~1]
Let $\cA_*(f) \subset \cA(f)$ denote the set of non-trivial attractors of $f$. Since $f$ is structurally
stable (in particular Axiom A and satisfies the no-cycles condition) then it is well known that
the union of the topological basins of the attractors (resp. repellers) is dense in $M$, that is,
\begin{equation}\label{eq:As}
\overline{\bigcup_{\Lambda \in \cA(f)} W^s(\Lambda)} = \overline{\bigcup_{\Lambda \in \cA(f^{-1})} W^u(\Lambda)} = M.
\end{equation}
Since the basins of trivial attractors and repellers do not intersect, the union of the basins of non-trivial attractors and non-trivial repellers is open and dense in M.
 In other words,
\begin{equation}\label{eq:Astar}
\bigcup_{\Lambda \in \cA_*(f)}  W^s(\Lambda) \; \cup \; \bigcup_{\Lambda \in \cA_*(f^{-1})}  W^u(\Lambda) \quad \text{ is 
open and dense in} \; M.
\end{equation}

The strategy used in the construction of the sets of periodic orbits $\Theta_a$ and $\Theta_r$ 
is to collect periodic orbits primarily among the set of trivial attractors/repellers (under our
assumptions this guarantees that (3)b and (4)b hold on the topological basin of trivial attractors/repellers)
and then to select further periodic orbits among the non-trivial attractors/repellers in a way that conditions (1) and (2) in the claim are satisfied. For that reason periodic orbits in trivial attractors/repellers will always be selected.

Such a selection of periodic orbits (not necessarily unique) can be done as follows, selecting first some periodic
orbits on the attractors and then selecting some periodic orbits among the repellers. 
Let $\cA(f)=\{\Lambda_1,\dots, \Lambda_\ell\}$ be an enumeration of the attractors in $\Omega(f)$ and 
$\Theta_a^0=\emptyset$. If $\Lambda_1=\{\mathcal O(p_1)\}$ is a trivial attractor take $\Theta_a^1=\{\mathcal O(p_1)\}$. 
Otherwise, $\Lambda_1$ is non-trivial 
and there exists a finite number of repellers $\Lambda\in \cA(f^{-1})$ so that $\Lambda \succ \Lambda_1$. 
If all such repellers $\Lambda$ heteroclinically related to $\Lambda_1$ are non-trivial take 
$\Theta_a^1=\{\mathcal O(p_1)\}$ ($p_1$ is any periodic point in $\Lambda_1$) and, otherwise, set $\Theta_a^1=\emptyset$. Proceeding recursively, for any $2\le j \le \ell-1$
if $\Lambda_j=\{\mathcal O(p_j)\}$ is a trivial attractor then take $\Theta_a^j:=\Theta_a^{j-1} \cup \{\mathcal O(p_j)\}$ and, otherwise, 
take $\Theta_a^j:=\Theta_a^{j-1} \cup \{\mathcal O(p_j)\}$ or $\Theta_a^j:=\Theta_a^{j-1}$ depending if 
all repellers heteroclinically related to $\Lambda_j$ are non-trivial
(and $p_j$ is any periodic point in $\Lambda_j$) or not, 
respectively. Take $\Theta_a=\Theta_a^\ell$.

If $\Theta_a\neq \emptyset$ and $\bigcup_{p \in \Theta_a} W^s(p)$ is dense in $M$ just take $\Theta_r=\emptyset$. Otherwise, let $\{\tilde \Lambda_1,\dots, \tilde \Lambda_s\}$ be an enumeration of the 
repellers in 
$\Omega(f)$ that are not heteroclinically related with any of the attractors that contain the periodic points in $\Theta_a$,  and take 
$\Theta_r=\{\mathcal O(q_1), \dots, \mathcal O(q_s)\}$ be formed by periodic orbits with 
$q_i\in \tilde \Lambda_i$, for $1\le i \le s$. In particular the sets $\Theta_a$ and $\Theta_r$
cannot be simultaneously empty.

Items (1), (3)(a) and (4)(a) in the claim are immediate from \eqref{eq:As} and the construction.
Item (2) follows from the fact that the repellers determining $\Theta_r$ are chosen not heteroclinically related
to any point of $\Theta_a$.
Finally, using ~\eqref{eq:Astar} and Lemma~\ref{le:non-connections} we conclude that items (3)(b) and (4)(b) 
also hold. This proves the claim.
\end{proof}

By construction, the union of topological  basins of the non-trivial repellers contains an open 
and dense subset $O_p \subset W^s(p)$ for any $p\in \Theta_a$ (and a similar statement holds for every $p\in \Theta_r$, if $\Theta_r$ is non-empty).   For every $p\in \Theta_a$,  resp. $\Theta_r$, let $P_p$ be the union of all periodic points in the (non-trivial) repellers, resp. attractors, of $f$ that are heteroclinically related to $H(p,f)$. In other words,
$$
P_p = \bigcup_{\substack{ \Lambda \succ H(p,f)\\ \Lambda \in \cA(f^{-1})}} P_{p,\Lambda} 
$$
where $P_{p,\Lambda} =  \text{Per}(f) \cap \Lambda$.
Now, if $P$ denotes the union of such periodic points taken over all points $p\in \Theta_a \cup \Theta_r$ then, 
since any countable infinite set is a disjoint countable union of countable infinite sets and 
$\sharp (\Theta_a\cup\Theta_r) <\infty$ we claim that there exists a (not necessarily unique) decomposition 
\begin{equation}\label{eq:disjointP}
P=\bigsqcup_{p\in \Theta_a\cup\Theta_r} \bigsqcup_{\ell \in \mathbb Z} P_{p,\ell}
\end{equation}
 as a disjoint union of countable infinite sets $P_{p,\ell} \subset P_p$ in such a way that for
$p\in \Theta_a$ (resp. for $p\in \Theta_r$) the set $P_{p,\ell}$ contains infinitely many 
periodic points in each of the non-trivial repellers whose basin intersects $W^s(\mathcal O(p))$ (resp.
non-trivial attractors whose basin intersects $W^u(\mathcal O(p))$). 
Indeed, given $\Lambda \in \cA(f^{-1})$ such that $\Lambda \succ H(p,f)$, as there are countable 
infinitely many periodic points in $\text{Per}(f) \cap \Lambda$ then there exists a bijection $\iota_\Lambda : \mathbb Z \times \mathbb Z \to
P_{p,\Lambda}$. Then, for any $\ell \in \mathbb Z$, the set 
$$
P_{p,\ell}:=\bigcup_{\Lambda \succ H(p,f), \Lambda \in \cA(f^{-1})} \iota_\Lambda(\{\ell\} \times \mathbb Z)
	\subset P_p
$$ 
satisfies the requirements of the claim 
involving ~\eqref{eq:disjointP}.
 We use the following key claim:

\medskip \noindent {\bf Claim~2:} \emph{For every periodic point $p\in \Theta_a$ there exists a countable and dense subset $D_p \subset 
W^s(\mathcal O(p))$ 
so that the following holds: for any $x,y\in D_p$ there are periodic points $p_x \neq p_y$ in ${P_p=}\bigsqcup_{\ell\in \mathbb Z}
P_{p, \ell}$ such that $x\in W^u(p_x)$ and $y\in W^u(p_y)$. In particular, there exists 
$n=n_{x,y} \ge 1$ for which $d(f^{-n}(x), f^{-n}(y))>\frac\vep{2}$.} 

\medskip

Since claim also holds for the periodic points in $\Theta_r$ (replacing $f$ by $f^{-1}$, in which case
unstable manifolds for $f$ become stable manifolds for $f^{-1}$), by the decomposition of periodic points in 
\eqref{eq:disjointP} we conclude that  there exists a countable and dense subset of $\bigcup_{p \in \Theta_a} 
W^s(\mathcal O(p)) 
\cup \bigcup_{p\in \Theta_r} W^u(\mathcal O(p))$  formed by points that are $\frac\vep{2}$ separated by either positive or negative 
iterations of $f$. By item (1) above, the later set is dense in $M$.
Thus, in order to complete the proof of the first part of the theorem we are left to prove the claim.

\begin{proof}[Proof of Claim~2]
Fix $p\in \Theta_a$ and let $\Delta \subset {W^s(\mathcal O(p))}$ be a fundamental domain of 
{$W^s(\mathcal O(p))$}. 
Given $\ell\in \mathbb Z$ define $\Delta_\ell = f^\ell(\Delta)$. Observe that 
${W^s(\mathcal O(p)) \setminus\{\mathcal O(p)\} } =\bigsqcup_{\ell\in \mathbb Z}  \Delta_\ell$.
Fix $\ell \in \mathbb Z$ and set $P^1_\ell:=P_{p,\ell} \, {\subset P_p}$.  Given any open covering of $\overline{\Delta_{\ell}}$ by 
open balls of radius $1/2$,  the compactness of the closure $\overline{\Delta_\ell}$ guarantees one can extract 
a finite covering $\{B^2_i\}_i$. 
The $\lambda$-lemma (as used in the proof of Lemma~\ref{le:non-connections}) implies all points in $\overline{\Delta_\ell}$ are accumulated 
by heteroclinic intersections between the unstable manifolds of every periodic point in $P^1_\ell$ and the stable manifold of $p$. 
Thus for every $i$ there exists $p^2_i \in P_\ell^1$ so that the intersection $W^u(p^2_i) \pitchfork W^s(p)$ contains some point $x_{i, 2}$ in $B^2_i$. Since periodic points in the same homoclinic class are homoclinically related, we can choose the periodic points $\{ p_{i,2}\}_i$
to be distinct. Moreover, by construction, the set $\{ x_{i,2}\}_i$ (we omit the dependence on $\ell$ for notational simplicity) 
is finite and $1/2$-dense in $\overline{\Delta}_\ell$. Moreover the set  $P_{\ell}^{2}:= P^1_{ \ell} \setminus \cup_i \{ p^2_i\}$
has still infinitely many periodic points in each of the repellers 
heteroclinically related to the attractor $\Lambda$ that contains $p$.

\begin{figure}[htb]
\includegraphics[height=6cm, width=9cm]{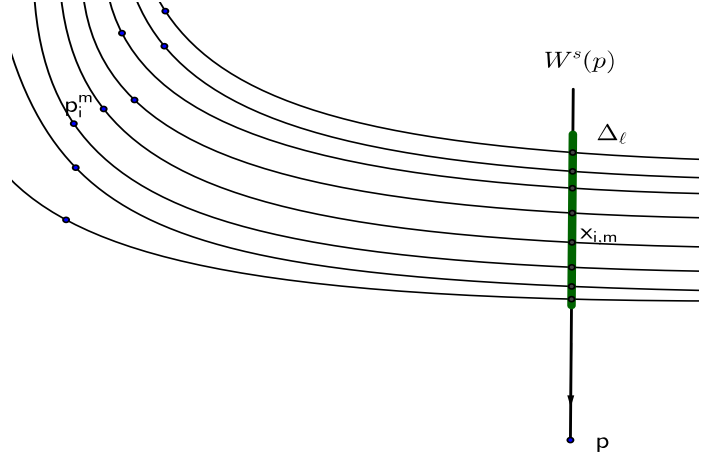}
\caption{Representation of $\frac1m$-dense heteroclinic intersections $\{x_{i,m}\}$ associated to periodic points in some 
non-trivial repeller in the fundamental domain $\Delta_\ell$ of $W^s(p)$.}
\end{figure}

Proceeding recursively, for every  $m\ge 1$ we obtain a finite number of periodic points $\{ p_i^m \} \in P_\ell^m$
whose heteroclinic intersections of the corresponding unstable manifolds with $W^s(p)$ contains a $1/m$-dense 
set $\{ x_{i,m}\}_i$ of points in $\overline{\Delta}_\ell$,
and the set  $P_{\ell}^{m+1}:= P^m_{ \ell} \setminus \cup_i \{ p^m_i\}$  has still infinitely many periodic points in each 
of the repellers heteroclinically related to $\Lambda$.
The resulting set $D_{p,\ell}:= \bigcup_{m\ge 1} \{x_{i,m}\}$, formed by points obtained by the heteroclinic intersections, 
is a countable dense subset in $\overline{\Delta_\ell}$ and the set $D_{p}:= \bigcup_{\ell\ge 1} D_{p,\ell}$ satisfies the 
requirements of the lemma: for any $x,y\in D_p$
there are periodic points $p_x \neq  p_y \in \text{Per}(f)$ such that $x\in W^u(p_x)$ and $y\in W^u(p_y)$. In consequence, 
$$
d(f^{-n}(x), f^{-n}(y))
	\ge d(f^{-n}(p_x), f^{-n}(p_y)) - d(f^{-n}(p_x), f^{-n}(x)) - d(f^{-n}(p_y), f^{-n}(y))
$$
can be taken larger that $\vep/2$ provided that $n$ is large enough (here we used that the set $\text{Per}(f)$ is negatively
expansive by $f$). This completes the proof of the lemma.
\end{proof}

\smallskip
\noindent \emph{Second part: (2) $\Rightarrow$ (3).} This is immediate.

\smallskip
\noindent \emph{Third part: (3) $\Rightarrow$ (1).} We prove this by contradiction. Assume that there are periodic
points $p_1,p_2$ so that  $\Lambda_1=\mathcal O(p_1)$ is a trivial repeller, $\Lambda_2=\mathcal O(p_2)$
is a trivial attractor and $W^u(\Lambda_1) \cap W^s(\Lambda_2) \neq \emptyset$. 
By uniform continuity, $f$ is sensitive to initial conditions if and only if $f^k$ ($k\ge 1$) is sensitive to initial conditions  
(although the constants of separability could change).
Hence, replacing $f$ by some suitable iterate $f^k$, we may assume without loss of generality that both $p_1,p_2$ 
are fixed points for $f$. The existence of a filtration 
guarantees that $W^u(\Lambda_1) \cap W^s(\Lambda_2)$ contains a non-empty open set $V$.
We claim that for any $\vep>0$ there exists an open subset $V_\vep\subset V$ of points so that $d(f^n(x), f^n(y))<\vep$
for all $n\in \mathbb Z$. Indeed, if $k=k(\vep,V)\ge 1$ is so that $f^j(V)\subset B(p_2,\vep)$ and
$f^{-j}(V)\subset B(p_1,\vep)$ for every $j\ge k$, and $x\in V$ it is enough to take the open set
$$
V_\vep=\{ y\in V : d( f^j(x), f^j(y) ) <\vep, \; \forall |j| \le k \}.
$$
This proves that there exists an open set of points whose iterates always remain $\vep$-close. 
Since $\vep$ was chosen arbitrary this contradicts the sensitivity to initial conditions assumption on $f$.  
The proof of the theorem is now complete.

\begin{example}\label{ex:choiceT2}
Assume that $f: \mathbb T^2 \to \mathbb T^2$ is the standard derived from Anosov diffeomorphism (see e.g. \cite[pp 539]{KH}). It is well known that $f$ is Axiom A and that $\Omega(f)=\{p\} \cup \Lambda$, where $p$ is a repeller and  $\Lambda$ is a non-trivial attractor. In this case it is trivial that $\Theta_r=\emptyset$ and $\Theta_a=\{p\}$ 
satisfy the requirements (1)-(3) in the first part of the proof of Theorem~\ref{prop:expansive.points}.
\end{example}

\begin{example}\label{ex:combinatorics}
Assume that $f: M\to M$ is an Axiom A diffeomorphism such that $\Omega(f)=\bigcup_{j=1}^5 \Lambda_j$, where 
$(\Lambda_j)_{j=1, 2}$ are attractors and $(\Lambda_j)_{j=3,4,5}$ are repellers. Let $p_j$ denote a periodic point in $\Lambda_j$, for every $j\in\{1,2,\dots, 5\}$. Assume further that 
$$
\Lambda_3 \succ \Lambda_1
\qquad
\Lambda_4 \succ \Lambda_1
\qquad
\text{and}
\quad
\Lambda_5 \succ \Lambda_2
$$
and that $\Lambda_1$ consists of a periodic sink. In this case, the sets $\Theta_a=\{\mathcal O(p_1), \mathcal O(p_2)\}$ and 
$\Theta_r=\emptyset$ (chosen according to the selection process in the proof of the theorem) satisfy conditions
(1)-(3). The diffeomorphism $f^{-1}$ is also Axiom A and, in this case, we get $\Theta_a=\{\mathcal O(p_5)\}$ and
$\Theta_r=\{\mathcal O(p_1)\}$.
\end{example}

\section{$C^0$-Conjugacy classes and $C^0$-centralizers of homeomorphisms}\label{sec:proofs}

\subsection{Proof of Theorem~\ref{prop:bijection}}
Fix $r\ge 0$ and $f \in \text{Diff}^{\,r}(M)$. For any $g\in \cC_f$ and 
$h\in \mathcal H_{f,g}$, consider the map 
$$
\begin{array}{rccc}
F_h \colon &  Z^0(f) & \to & \mathcal H_{f,g} \\ \medskip
	& \tilde f & \mapsto & h \circ \tilde f,
\end{array}
$$
where the subsets $Z^0(f), \mathcal H_{f,g}$ of $\Homeo$ are endowed with the distance $d_0$.
We claim that $F_h$ is well defined. Indeed, for any $ \tilde f \in Z^0(f)$ it holds $\tilde f \circ f = f\circ \tilde f $. Thus
$
(h\circ \tilde f )\circ f  
	 = h\circ (\tilde f\circ f)  = (h\circ  f )\circ \tilde f 
	 = (g\circ h) \circ \tilde f = g\circ (h \circ \tilde f).
$
This can be also observed by the commutative diagram:
\begin{center}
\begin{tikzpicture}
  \matrix (m) [matrix of math nodes,row sep=3em,column sep=4em,minimum width=2em]
  {
     M & M & M \\
     M & M & M \\};
  \path[-stealth]
    (m-1-1) edge node [left] {$f$} (m-2-1)
            edge node [above] {$\tilde f$} (m-1-2)
    (m-2-1.east|-m-2-2) edge node [below] {$\tilde f$}
    (m-2-2)
    (m-1-2) edge node [right] {$f$} (m-2-2) 
    edge node [above] {$h$} (m-1-3)
     (m-2-2) edge node [below] {$h$} (m-2-3)
     (m-1-3) edge node [right] {$g$} (m-2-3);
\end{tikzpicture}
\end{center}
This proves that $h\circ \tilde f$ is also a $C^0$-conjugacy between $f$ and $g$. 
We proceed to prove that $F_h$ is indeed a homeomorphism.
Since $h$ is a homeomorphism then $F_h$ is clearly injective. To prove the continuity of $F_h$, fix an arbitrary 
$\tilde f \in Z^0(f)$. By the uniform continuity of $h$, given $\vep>0$ there exists $0<\delta <\vep$ so that 
$d(h(x),h(y))<\vep$ for any points $x,y\in M$ satisfying $d(x,y)<\delta$. 
In particular, if $\tilde f, \hat f \in Z^0(f)$ and $d_0(\tilde f, \hat f)<\delta$ then
\begin{align*}
d_0(F_h(\tilde f), F_h(\hat f)) & = \max\{ \sup_{x\in M} d(h ( \tilde f(x)), h( \hat f(x)) ), \sup_{x\in M} d(\tilde f^{-1} ( h^{-1}
(x)), \hat f^{-1} ( h^{-1}(x) )) \} \\
& < \max\{ \vep , d_{0}(\tilde f, \hat f)\} =\vep,
\end{align*}
which proves the continuity of $F_h$.
To prove that $F_h$ is surjective, given $\tilde h \in \mathcal H_{f,g}$ write $\tilde h=h \circ (h^{-1}\circ \tilde h)$ and note that $h^{-1}\circ \tilde h$ is a homeomorphism. Moreover 
$$
f\circ (h^{-1}\circ \tilde h)
	= (f\circ h^{-1})\circ \tilde h
	= (h^{-1}\circ g) \circ \tilde h 
	= h^{-1}\circ (g \circ \tilde h)
	= (h^{-1} \circ \tilde h) \circ f,
$$
which can also be read from the commutative diagram
\begin{center}
\begin{tikzpicture}
  \matrix (m) [matrix of math nodes,row sep=3em,column sep=4em,minimum width=2em]
  {
     M & M & M \\
     M & M & M \\};
  \path[-stealth]
    (m-1-1) edge node [left] {$f$} (m-2-1)
            edge node [above] {$\tilde h$} (m-1-2)
    (m-2-1.east|-m-2-2) edge node [below] {$\tilde h$}
    (m-2-2)
    (m-1-2) edge node [right] {$g$} (m-2-2) 
    edge node [above] {$h^{-1}$} (m-1-3)
     (m-2-2) edge node [below] {$h^{-1}$} (m-2-3)
     (m-1-3) edge node [right] {$f$} (m-2-3);
\end{tikzpicture}
\end{center}
This proves that $h^{-1}\circ \tilde h \in Z^0(f)$, and so
$F_h$ is a continuous bijection whose inverse map is
$F_h^{-1} \colon  \mathcal H_{f,g}   \to  Z^0(f)$ given by 
$F_h^{-1}(\tilde h) =h^{-1} \circ \tilde h$.
Since $h^{-1}$ is continuous, hence uniformly continuous, then $F_h^{-1}$ is clearly continuous. Altogether this proves that  $F_h$ is a homeomorphism and finishes the proof of the first part of the proposition. 

A simple computation shows that $F_h( \tilde f \circ \hat f) = h\circ (\tilde f \circ \hat f) = (h\circ \tilde f) \circ h^{-1} \circ (h \circ \hat f) = F_h(\tilde f) \circ h^{-1} \circ F_h(\hat f)$ for every $\tilde f, \hat f \in Z^0(f)$. 
Property (i) is immediate. Property (iii) is a direct consequence of the fact that homeomorphisms preserve discrete sets. 
We are left to prove property (ii). Since the elements in the decomposition $\cH_f = \bigcup_{g\in \cC_f} \cH_{f,g}$ 
are pairwise disjoint, for any $h\in \cH_f$ there exists a unique $g\in \cC_f$ so that $h\in \cH_{f,g}$ (we write for simplicity
$h=h_g$). The map
$$
\begin{array}{rccc}
F \colon &  \cH_f & \to &  \cC_f \\ \medskip
	& h=h_g & \mapsto & g
\end{array}
$$
is continuous. Indeed, given $\vep>0$ take $\delta=\vep/2$ and
assume that $d_0(h, \tilde h)<\delta$, where $h=h_{g}$ and $\tilde h= \tilde h_{\tilde g}$.
Observe that $ g \circ (h_g \circ \tilde h_{\tilde g}^{-1}) = (h_g \circ \tilde h_{\tilde g}^{-1}) \circ \tilde g$ 
(in other words $\bar h := h_g \circ \tilde h_{\tilde g}^{-1} \in \cH_{\tilde g, g}$). Moreover,
$
d(\bar h(x), x) 
	= d(h_g (\tilde h_{\tilde g}^{-1} x), \tilde h_{\tilde g} (\tilde h_{\tilde g}^{-1}x)) < \delta
$
for every $x\in M$. This, together with identical computations for $\bar h^{-1}$ implies that
$
d_0( \bar h, id) <\delta. 
$
Thus, 
$$
d_0( g, \tilde g) =  d_0( \bar h \circ \tilde g \circ \bar h^{-1}, \tilde g )
		\le d_0( \bar h \circ \tilde g \circ \bar h^{-1}, \tilde g \circ \bar h^{-1})
		+ d_0(\tilde g \circ \bar h^{-1}, \tilde g ) <\vep.
$$
This proves the continuity of $F$. Since $F^{-1}(\{g\})=\cH_{f,g} \simeq Z^0(f)$ for every $g\in \cC_f$ then 
we conclude that $\cH_f$ is homeomorphic to $\cC_f \times Z^0(f)$.
This proves (ii) and completes the proof of the proposition. \hfill $\square$

\medskip

Although the argument used in the proof of Lemma~\ref{thm:C0-homoclinic} is essentially contained in~\cite{Wa70}, we include it here for completeness.

\subsection{Proof of Lemma~\ref{thm:C0-homoclinic}}

Take  $r\ge 0$ and  assume $f \in \text{Diff}^{\,r}(M)$ has an $f$-invariant subset  $\Lambda$
such that $f\mid_\Lambda$ is expansive. Fix $0\le k \le  r$. By the inclusion 
$Z^0(f \mid_{ \Lambda } ) \supset Z^k(f \mid_{ \Lambda } )$ it is enough to prove that 
$Z^0(f\mid_{\Lambda})$ is a discrete subgroup of $\text{Homeo}(\Lambda)$.
Due to the subgroup structure in $\Homeo$, in order to prove that $Z^0(f\mid_{{\Lambda}})$ is
discrete it is enough to prove that there exists $\delta>0$ so that any 
$h\in Z^0(f\mid_{ \Lambda})$ with $d_0(h,id_{\Lambda})<\delta$ coincides with the identity map $id_{{\Lambda}}$.
Let $\vep>0$ be an expansiveness constant for $f\mid_{\Lambda}$ and $\delta=\vep/2$. Take $h\in Z(f\mid_{\Lambda})$ so that 
$d_0(h,id\mid_{\Lambda})<\delta$. Since $\Lambda$ is an $f$-invariant set and
$h\circ f = f\circ h$ on $\Lambda$ then 
$$
d( f^n(h(x)), f^n(x)  )
	= d( h(f^n(x)), f^n(x)  )
	\le d_0( h, id) <\vep/2
$$
for any $x\in \Lambda$ and every $n\in \mathbb Z$. By expansiveness it follows that $h=id\mid_\Lambda$.
To finish the proof of the lemma, just observe that $f$ is an Axiom A diffeomorphism then ${\Omega(f)}$ consists
of a finite number of hyperbolic, hence expansive, homoclinic classes. 
In particular, $Z^0(f \mid_{ \Omega(f)})$ is discrete for every $C^r$ Axiom A diffeomorphism and $1\le r\le \infty$. 
This completes the proof of the lemma.

\begin{remark}
In the case of an Axiom A diffeomorphism $f$ there exists $\vep>0$ and a $C^1$ neighborhood $\cU$ of $f$ 
so that the expansiveness constant $\vep_g$ for $g\mid_{\Omega(g)}$ is uniformly bounded below by $\vep>0$ for all 
$g\in \mathcal U$.
Thus, there exists $\vep>0$ so that $d_0(h,id)>\vep$ for any  $h\in Z^0(g\mid_{H(p_g,g)})$ and $g\in \cU$.
This means that the smallest distance to identity in $Z^0(f\mid_{\Omega(f)}) \setminus \{id\}$ can be taken uniform in a small
neighborhood of $f$.
\end{remark}

\section{Dense expansiveness and conjugacies $C^0$-close to identity}\label{sec:uniqueness}

The purpose of this section is to prove Theorem~\ref{abstractthm} and their consequences.

\subsection{Proof of Theorem~\ref{abstractthm} }

Let $f\in \text{Diff}^{\,1}(M)$ be a structurally stable diffeomorphism, hence Axiom A.
It follows from the spectral decomposition theorem for Axiom A diffeomorphisms that  
$\Omega(f)=\bigcup_{i=1}^k \Lambda_i$ where each $\Lambda_i$ is a hyperbolic homoclinic class
associated to a periodic point $p_{i} \in \text{Per}(f)$.  The equivalence between (i) and (ii) 
follows from Theorem~\ref{prop:bijection}. Here we prove the remaining equivalences as follows:

\medskip
\noindent \emph{$(iii) \Rightarrow (iv)$}
\smallskip

\noindent Assume that for any maximal totally ordered chain $\Lambda_{i_s} \succ \dots \succ \Lambda_{i_2} \succ \Lambda_{i_1}$ of basic pieces in $\Omega(f)$ one has $\dim W^u(\Lambda_{i_j})= \dim W^u(\Lambda_{i_1})$ is 
constant for every $1\le j \le s$. Since $f$ is Axiom A and satisfies the strong transversality condition, 
for any $x\in M$ there exist $y,z\in \Omega(f)$ uniquely determined so that $x\in W^u(y) \pitchfork W^s(z)$.
In particular, $\dim W^u(y)+ \dim W^s(z)=\dim M$ and there exists $\vep_x>0$ so that 
$\mathcal F^u_{\vep_x}(x) \pitchfork \mathcal F^s_{\vep_x}(x) =\{x\}$, where $\mathcal F^u_{\vep_x}(x)$ 
denotes the $\vep_x$-ball around $x$ in $W^u(y)$ and $\mathcal F^s_{\vep_x}(x)$ 
denotes the $\vep_x$-ball around $x$ in $W^s(z)$. 
Since $M$ is compact and the invariant foliations vary continuously with the point on compact parts then there exists
$\vep_0>0$ so that $\mathcal F^u_{\vep_0}(x) \pitchfork \mathcal F^s_{\vep_0}(x) =\{x\}$ for every $x\in M$.
As $\Omega(f)$ is a hyperbolic set, reducing $\vep_0$ if necessary, we may assume that 
for every $x\in \Omega(f)$, the stable set
$$
B^f_\infty(x,\vep_0):=\{y\in M \colon d(f^n(x), f^n(y))<\vep_0 \;\text{for every}\;  n\in \mathbb Z_+\}
$$
is contained in local stable manifold $\cF_{\vep_0}^s(x)$ (cf. stable manifold theorem in \cite{Shub}).
We may reduce $\vep_0$ if necessary so that $\min_{i\neq j} \dist (\Lambda_i,\Lambda_j)>\vep_0$.

We proceed to prove that $f$ is expansive. We claim that $\vep_0$ is an expansiveness constant for 
$f\mid_{\Omega(f)}$.
Thus, if $x,y \in \Omega(f)$ satisfy $d(f^n(x), f^n(y))<\vep_0$ for every $n\in \mathbb Z$ then $y\in \cF_{\vep_0}^s(x)
\pitchfork \cF_{\vep_0}^u(x)$ and, consequently, $y=x$. This proves that $\vep_0$ is an expansiveness constant
for $f\mid_{\Omega(f)}$. 
Fix $0<\vep<\frac{\vep_0}3$.  Now we prove that $B^f_\infty(x,\vep) \subset \cF^s_\vep(x)$ for every $x\in M\setminus \Omega(f)$. 
Assume that $x \in M \setminus \Omega(f)$ and that $y\in M$ is such that $d(f^n(x), f^n(y))<\vep$ for every $n\in \mathbb Z_+$. 
There are $p_x, p_y\in \Omega(f)$ so that $x\in W^s(p_x)$ and $y\in W^s(p_y)$ (cf. ~\eqref{eq:M}). 
Recall $\min_{i\neq j} \dist (\Lambda_i,\Lambda_j)>\vep_0$. By triangular inequality, there exists $N\ge 1$ large so that
$$
d(f^{n}(p_x), f^{n}(p_y)) \le d(f^{n}(x), f^n(y)) + d(f^{n}(y), f^n(p_y)) + d(f^{n}(x), f^n(p_x))
	\le \vep+ 2 C\lambda^n \vep
<\vep_0
$$
for every $n\ge N$. This proves that $f^N(p_y) \in B^f_\infty(f^N(p_x),\vep) \subset \cF^s_\vep(f^N(p_x)) \subset W^s(f^N(p_x))$
and, consequently, $p_y\in W^s(p_x)$. This proves that $y\in \cF_\vep^s(x)$, as desired.
Finally, if $x,y\in M$ and $d(f^n(x), f^n(y))<\vep$ for every $n\in \mathbb Z$ then 
$
y\in B^f_\infty(x,\vep) \cap B^{f^{-1}}_\infty(x,\vep) 
	\subset \cF^s_\vep(x) \cap \cF^u_\vep(x) = \{x\}.
$
This proves that $f$ is $\vep$-expansive. By \cite{Ma75}, $f$ is an Anosov diffeomorphism.

\medskip
\noindent \emph{$(iv) \Rightarrow (i)$}
\smallskip

\noindent Since every Anosov diffeomorphism is expansive, (i) follows from Lemma~\ref{thm:C0-homoclinic}.

\medskip
\noindent \emph{$(i) \Rightarrow (iii)$}
\smallskip

\noindent If $f$ is an Anosov diffeomorphism then both properties (i) and (iii) hold. For that reason we need only to
consider $C^1$-structurally stable diffeomorphisms that are not Anosov.
By structural stability, for any maximal totally ordered chain 
$\tilde \Lambda_{i_s} \succ \dots \succ \tilde \Lambda_{i_2} \succ \tilde \Lambda_{i_1}$ of basic pieces in $\Omega(f)$ 
recall that $\dim W^u(\tilde \Lambda_{i_j}) \le \dim W^u(\tilde \Lambda_{i_{j+1}})$ for every $1\le j \le s-1$.
We prove this implication by contraposition.  Assume that there exists a maximal totally ordered chain 
$\Lambda_{i_s} \succ \dots \succ \Lambda_{i_2} \succ \Lambda_{i_1}$ of basic pieces in $\Omega(f)$ 
so that $\dim W^u(\Lambda_{i_1}) < \dim W^u(\Lambda_{i_s})$. Our purpose is to prove that $Z^0(f)$ is not discrete.

Since $\Omega(f)$ is a hyperbolic set for $f$ (hence $f\mid_{\Omega(f)}$ is expansive) 
there exists $\vep_0>0$ an expansiveness constant for $f\mid_{\Omega(f)}$.
Given any element $h\in Z^0(f)$ it preserves the non-wandering set of $f$, that is $h(\Omega(f))=\Omega(f)$.
Therefore, if in addition it satisfies $d_{C^0}(h,id)<\vep_0$ then $h\mid_{\Omega(f)}=id$ (cf. Lemma~\ref{thm:C0-homoclinic}).
Furthermore, since any element in $Z^0(f)$ preserves stable and unstable foliations then
$h(W^s(x))=W^s(x)$ and $h(W^u(x))=W^u(x)$ for every $x\in \Omega(f)$.
In particular $h$ preserves heteroclinic points. 

The remaining of the proof is constructive and we build a continuum of homeomorphisms $C^0$-close to the identity 
that belong to $Z^0(f)$. 
By Lemma~\ref{le:non-connections}, there exists a periodic point in $\Lambda_{i_1}$ that is heteroclinically related
to some periodic point in $\Lambda_{i_s}$. Moreover, since compact parts of stable and unstable manifolds vary 
continuously with the point and there exists a filtration, the argument used in the final part of the proof of 
Lemma~\ref{le:non-connections} guarantees that there exists an open set 
$U \subset M \setminus \Omega(f)$ so that $\dist_H(f^n(U), \Lambda_{i_1}) \to 0$ and 
$\dist_H(f^{-n}(U),\Lambda_{i_s}) \to 0$ as $n$ tends to infinity
(here $\dist_H$ denotes the Hausdorff distance).
Since the sets $\Lambda_{i_1}$ and $\Lambda_{i_s}$
are locally maximal then $W^s(\Lambda_{i_1})= \bigcup_{x\in \Lambda_{i_1}} W^s(x)$ and 
$W^u(\Lambda_{i_s})= \bigcup_{x\in \Lambda_{i_s}} W^u(x)$. In particular every point in $U$ belongs to a stable 
manifold of some point of $\Lambda_{i_1}$ and to an unstable manifold of some point of $\Lambda_{i_s}$. 
Diminishing $U$ if necessary, we may assume that 
the collection of compact sets $(\overline{f^n(U)})_{n\in \mathbb N}$ is pairwise disjoint, and that $U$ is foliated by pieces of
stable and unstable disks.  

Since we assumed $\dim W^u(\Lambda_{i_1}) < \dim W^u(\Lambda_{i_s})$ then $\dim W^u(\Lambda_{i_s}) 
+ \dim W^s(\Lambda_{i_1}) > \dim M$ for every $x\in U$. 
Let $\alpha_x \subset U$ be a smooth submanifold of dimension one contained 
in the intersection $\mathcal F^u(x) \cap \mathcal F^s(x)$ (see Figure~3 below). 
\begin{figure}[htb]
\includegraphics[height=4cm, width=9cm]{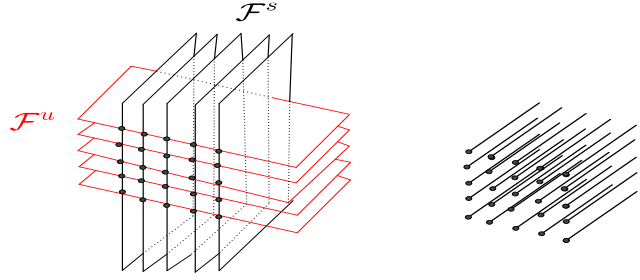}
\caption{Representation of the intersection of pieces of stable and unstable foliations in $U$ (on the left), and smooth submanifolds contained in their intersection (on the right).}
\end{figure}
Up to take a smaller $U$ if necessary, by continuity of the  intersections between invariant manifolds one may choose the 
family $(\alpha_x)_{x\in U}$ to vary continuously with $x$ and to partition $U$. By convention, $\alpha_x=\alpha_z$ for every $z\in \alpha_x$.
Up to conjugacy by a homeomorphism on $U$, there exist natural coordinate system given by arc length. More precisely,
if $\ell(\alpha_x)$ denotes the length of the one-dimensional submanifold $\alpha_x$
and $E \subset W^u(\Lambda_{i_s}) \cap U$ is the continuous submanifold that intersects each submanifold $\alpha_x$
exactly on its midpoint, the parametrization of $\alpha_x$ by arc length with the same orientation for all
points (which we denote by $\alpha_x: ]-\ell(\alpha_x)/2,  \ell(\alpha_x)/2[ \to M$ by some abuse of notation) induces a natural homeomorphism 
$$
\begin{array}{rcc}
\varphi: U & \to & \tilde U:= \{ (x, \ell) \in E \times \mathbb R \colon  x\in E, \;  -\ell(\alpha_x)/2 < \ell < \ell(\alpha_x)/2 \} \\
	z & \mapsto & (x,\ell)
\end{array}
$$ 
where $\alpha_z \cap E =\{x\}$ and $-\ell(\alpha_x)/2 < \ell < \ell(\alpha_x)/2$ is unique so that $\alpha_x(\ell)=z$.
Now, let $V\subsetneq W \subsetneq U$ be two small open sets so that $V\subsetneq \overline V \subsetneq W \subsetneq \overline W \subsetneq U$ and $\ell(\alpha_x)\ge \ell_0>0$  for all points $x\in V$ (see Figure~4). 
Observe that $\varphi\mid_{\overline W}: \overline W \to \varphi(\overline W)$ and its inverse are 
uniformly continuous.
\begin{figure}[htb]
\includegraphics[scale=.40]{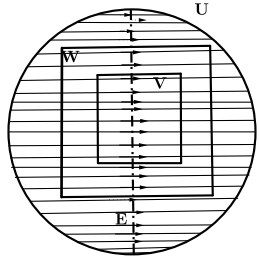}
\caption{Foliated chart}
\end{figure}
Since every element in the centralizer preserves heteroclinic points this motivates a perturbation 
of the identity map along this one-dimensional fibers. Fix a continuous `bump function' $\beta : \mathbb R \to [0,1]$ so that 
$\beta\mid_{\mathbb R\setminus [- \ell_0/2,\ell_0/2]} \equiv 0$  and $\beta\mid_{[- \ell_0/4,\ell_0/4]} \equiv 1$. Fix $\zeta>0$ small and $t\in [0,1]$. For any $z\in U$, consider the one-parameter family
$$
h_{0,t}(z) :=
	\begin{cases} 
	\begin{array}{cl}
	 \varphi^{-1} \circ H_t \circ \varphi(z) &, \text{if}\, z\in W \\  \vspace{.2cm}
	 z &, \text{if} \, z\in U\setminus W
	\end{array}
	\end{cases}
$$
where $H_t: W \to W$ is an 
given by $(x,\ell) \mapsto (x, \ell + \beta(\ell) t\zeta)$ and $(x,\ell)=\varphi(z)$. 
In rough terms, the homeomorphism $h_t$ on $V$ pushes $t$ along the direction determined by the oriented
submanifolds $\alpha_z$. 
For any $z\in M \setminus \bigcup_{n\in \mathbb Z} f^n(U)$ define $h_t(z)=z$ and, otherwise, define 
$h_t(z)= f^{-n} ( h_{0,t} (f^n(z))  )$ where $n=n(z)\in \mathbb Z$ 
is (unique) so that $f^{n}(z)\in U$. 

By construction $h_0=Id$ and $(h_t)_{t\in [0,1]}$ is a continuous family such that $f\circ h_t = h_t\circ f$ 
(equivalently $h_t\in Z^0(f)$) for every $t\in [0,1]$. In order to complete the proof 
of the theorem we are left to prove that $h_t$ is a homeomorphism for every $t\in [0,1]$.
Notice that $h_t$ is invertible. We proceed to prove the following:

\smallskip
\noindent {\it Claim:} $h_t$ is continuous

\begin{proof}[Proof of the claim:]
 On $M \setminus \overline{\bigcup_{n\in \mathbb Z} f^n(W)}$ we have that $h_t=id$ is clearly continuous. Since 
$h_t$ is also continuous on each open set of the form $f^n(W)$, $n\in \mathbb Z$, it remains to prove the continuity
on points that belong to either the attractor $\Lambda_{i_1}$ or the repeller $\Lambda_{i_s}$. Let $\lambda\in (0,1)$
denote a hyperbolicity constant for $\Omega(f)$.  

Fix $x\in \Lambda_{i_1}$ (if $x\in \Lambda_{i_s}$ the computations are analogous, replacing $f$ by $f^{-1}$)
and $\vep>0$. Pick $0<\delta <\vep/2$ small so that for any point $y\in \bigcup_{n\in \mathbb Z} f^n(W)$ with
$d(x,y)<\delta$, the unique $n_y\in \mathbb Z$ (and necessarily negative)  so that $f^{n_y}(y) \in U$ 
satisfies $\lambda^{-n_y} \zeta < \vep/2$. This holds because the number of iterates necessary for a point 
to enter the set $W$  grows to infinity for points sufficiently close to the attractor $\Lambda_{i_1}$.
With this choice, if $d(x,y)<\delta$ and $y \notin \bigcup_{n\in \mathbb Z} f^n(W)$ then 
$d(h(x),h(y))=d(x,y)<\vep/2.$
Now assume, otherwise, that $d(x,y)<\delta$ and $y \in \bigcup_{n\in \mathbb Z} f^n(W)$.
By construction, $h_{0,t}(f^{n_y}(y)) \in W^s(f^{n_y}(y))$ and $d(h_{0,t}(f^{n_y}(y)), f^{n_y}(y))<\zeta$.
\begin{figure}[htb]
\includegraphics[scale=.50]{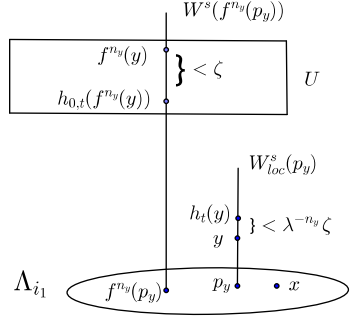}
\caption{Continuity argument}
\end{figure}
Therefore, by triangular inequality together with the uniform contraction along stable leaves,
\begin{align*}
d( h_t(x), h_t(y)) 
	& = d( x, f^{-n_y}\circ h_{0,t} \circ f^{n_y}(y)) \\
	& = d( f^{-n_y} \circ f^{n_y}(x), f^{-n_y} \circ h_{0,t} \circ f^{n_y}(y)) \\
	& \le d(  f^{-n_y} \circ f^{n_y}(x), f^{-n_y} \circ f^{n_y}(y)  ) 
		+ d(   f^{-n_y} \circ f^{n_y}(y), f^{-n_y} \circ h_{0,t} \circ f^{n_y}(y)  ) \\
	& \le d(x, y)
		+ \lambda^{-n_y} d( f^{n_y}(y), h_{0,t} \circ f^{n_y}(y)  ) \\
	& \le \delta
		+ \lambda^{-n_y} \zeta <\vep.
\end{align*}
This proves the continuity of $h_t$ at $x$, and completes the proof of the claim.
\end{proof}

Since the continuity of $h_t^{-1}$ is analogous we conclude that $h_t$ is a homeomorphism $C^0$-close to the identity and that $h_t \in Z^0(f)$ for every $t\in [0,1]$. Therefore, $Z^0(f)$ is not discrete.
This finishes the proof of the theorem.

\section{Some examples}\label{sec:examples}

Our first example is a Morse-Smale diffeomorphism (hence structurally stable), in which case the $C^0$-centralizer contains 
a continuum of homeomorphisms $C^0$-close to identity.

\begin{example}\label{ex:N-S}
Let $f: \mathbb S^1 \to \mathbb S^1$ be a circle diffeomorphism 
so that the non-wandering set is $\Omega(f)=\{N,S\}$, where $N$ denotes the repelling fixed
point and $S$ denotes the contracting fixed point. $f$ is clearly structurally stable.
We claim that $Z^0(f)$ contains a continuum $C^0$-close to the identity. Since $f \mid_{\mathbb S^1 \setminus \{N\}}$
is homeomorphic to a linear contraction on the real line, we may assume without loss of generality that $f(x)=\frac12 x$ and fix
$I= [-1,-\frac12) \cup (\frac12, 1]$ as a fundamental domain. 
For every $x\in \mathbb S^1 \setminus \{N\}$ let  $n=n(x)\in \mathbb Z$ be the unique
integer so that $f^n(x)\in I$. Using that $n(f(x)) = n(x)-1$ for every $x\in \mathbb R$ it is not hard to check that 
every continuous increasing map $h_0: I \to I$ with $h_0 \mid_{\partial I} =id$ determines an element in $Z^0(f)$: the
homeomorphism $h: \mathbb S^1\setminus\{N,S\} \to \mathbb S^1\setminus\{N,S\}$ given by $h(x) = f^{-n(x)} ( h_0 (f^{n(x)}(x))  )$ 
satisfies
\begin{align*}
h ( f (x ) )
	& =  f^{-n(f(x))} ( h_0 (f^{n(f(x))}(f(x)))  ) 
	 = f^{-n(x)+1} ( h_0 (f^{n(x)-1}(f(x)))  ) \\
	& = f( f^{-n(x)} ( h_0 (f^{n(x)}(x)  ))) 
	= f (h(x)) 
\end{align*}
for every $\mathbb S^1\setminus\{N,S\}$, and extends continuously to $\mathbb S^1$ by $h(N)=N$ and $h(S)=S$.
As a consequence of Theorem~\ref{prop:bijection}, for every $g\in \cC_f$ there exists a continuum of homeomorphisms 
conjugating $f$ and $g$ and  that are $C^0$-close to the identity. Moreover, for any $\vep>0$ there exists a ball $B$ 
of radius $\vep$ so that $\diam f^n(B)<\vep$ for all $n\in \mathbb Z$. This implies that the set of points of $\vep$-expansiveness are not dense in $\mathbb S^1$.
\end{example}

In the next example we exhibit an open set of densely expansive structurally stable diffeomorphisms with non-discrete $C^0$-centralizer.

\begin{example}\label{ex:2}
Let $f : \mathbb S^1 \to \mathbb S^1$ be the Morse-Smale diffeomorphism from Example~\ref{ex:N-S}
and let $g : \mathbb T^2 \to \mathbb T^2$ be an Anosov diffeomorphism. 
It is clear that the diffeomorphism $f\times g : \mathbb S^1 \times \mathbb T^2 \to \mathbb S^1 \times \mathbb T^2$ is an Axiom A $C^1$-diffeomorphism, the non-wandering set consists of two non-trivial basic pieces and that it 
satisfies the strong transversality condition. Moreover, $Z^0(f\times g) \supset Z^0(f) \times \{id\}$, thus it
is not discrete. Then, Theorem~\ref{prop:bijection} guarantees there exists a continuum of conjugacies 
$C^0$-close to the identity between $f\times g$ and any $C^1$-diffeomorphism $F\in \cC_{f\times g}$. 
By structural stability, Theorem~\ref{prop:bijection} implies that the later holds for a $C^1$-open neighborhood
of the diffeomorphism $f\times g$. 
In contrast to Example~\ref{ex:N-S}, the non-wandering set is formed by one attractor and one repeller, both non-trivial. 
By Theorem~\ref{prop:expansive.points}, every $C^1$-small perturbation of $f\times g$ has sensitivity
to initial conditions.
Finally, observe that  the set of points with expansiveness are dense in $\mathbb S^1 \times \mathbb T^2$
while the centralizer is not discrete. This shows that dynamics with a dense subset of expansive points may have non-discrete
$C^0$-centralizer and proves that Walters' lemma is optimal (cf. Lemma~\ref{thm:C0-homoclinic}).
Moreover, since this is a partially hyperbolic diffeomorphism with a one-dimensional central bundle 
then it is  also entropy expansive (see \cite{DFPV} for definition and proof).
\end{example}

The following example shows that $C^0$ and $C^1$-centralizers can be both discrete but distinct for a locally $C^1$-Baire generic set of $C^1$-diffeomorphisms.

\begin{example}
Let $f$ be a $C^1$-Anosov diffeomorphism on the two torus $\mathbb T^2$ with a unique fixed point. Then there 
exists a $C^1$-open neighborhood $\mathcal U$ of $f\in \text{Diff}^{\,1}(\mathbb T^2)$ formed subset of Anosov diffeomorphisms, 
topologically conjugate to $f$. The $C^0$-centralizer of $f$ is discrete but non-trivial (it has two generators) 
cf.~\cite{Ro05}.
Since every $C^1$-diffeomorphism $g\in \mathcal U$ is topologically conjugate to $f$ then it follows from
Theorem~\ref{prop:bijection} that the $C^0$-centralizer of $g$ is discrete but non-trivial.
Now, we recall that there exists a $C^1$-Baire residual subset $\mathcal R \subset \cU$ 
diffeomorphisms so that $Z^1(g)$ is trivial for every $g\in \mathcal R$ (cf.~\cite{BCW}).
Thus there exists a $C^1$-Baire residual subset $\mathcal R_1 \subset \cU$ so that 
$
Z^0(g) \supsetneq Z^1(g) =\{g^n \colon n\in \mathbb Z\}
	\quad \text{for every} \; g\in \mathcal R_1.
$ 
In particular, the analogous affirmative statement to Smale's question is no longer true for $C^0$-centralizers.
\end{example}

We also derive a consequence for reversible dynamics. 

\begin{example}
Given an involution  $R: M \to M$  (ie. $R^2=id$) let 
$\text{Homeo}_R(M)$ denote the set of homeomorphisms $f$ so that $R\circ f = f^{-1} \circ R$. 
These are called \emph{$R$-reversible homeomorphisms}. Notice that $f\in \text{Homeo}_R(M)$ if and only if
$f^{-1}$ belongs to the conjugacy class of $f\in \Homeo$ and that $R \in \cH_{f,f^{-1}}$. 
Consider the (non-empty) set
$
\{ h\in \Homeo \colon h\circ f = f^{-1}\circ h \}
$
of conjugacies between $f$ and $f^{-1}$. 
By Theorem~\ref{prop:bijection}, the map
$F_R \colon  Z^0(f)  \to  \mathcal H_{f,f^{-1}}$ given by 
$F_R( \tilde f) =R \circ \tilde f$ is a homeomorphism. Thus, if  $f\in \text{Homeo}_R(M)$ then 
$Z^0(f)$ is discrete if and only if  there exists $\vep>0$ so that there is no conjugacy between 
$f$ and $f^{-1}$, distinct from $R$ that is $\vep$-$C^0$-close to $R$.
Clearly, both properties hold for reversible Anosov diffeomorphisms.
Moreover, we deduce that if $Z^0(f)$ is trivial then every conjugation 
between $f$ and $f^{-1}$ is of the form $R\circ f^{n}$ for some $n\in \mathbb Z$.
\end{example}

\medskip
\subsection*{Acknowledgements} 
The authors are grateful to V. Ara\'ujo for useful comments and to the anonymous referee for valuable comments that really helped to improve the manuscript.
Most of this work was developed during a visit of the second author to University of Porto, whose excellent research
conditions are greatly acknowledged. J.R. and P.V. were partially supported by
 by CMUP \-(UID/\-MAT/\-00144/2013), which is funded by FCT (Portugal) with national (MEC) and European 
 structural funds through the programs FEDER, under the partnership agreement PT2020.
P.V. was partially supported by CNPq-Brazil.



\begin{thebibliography}{00}

\bibitem{BF} 
L. Bakker and T. Fisher.
\emph{Open sets of diffeomorphisms with trivial centralizer in the $C^1$-topology}, Nonlinearity, 27 (2014) 2859--2885.

\bibitem{BCW} C. Bonatti, S. Crovisier and A. Wilkinson. 
\emph{The $C^1$ generic diffeomorphism has trivial centralizer},
Publ. Math. IHES, 109 (2009) 185--244. 

\bibitem{BoRoVa} 
W. Bonomo, J. Rocha and P. Varandas.
\emph{The centralizer of expansive flows and $\mathbb R^d$-actions and an application to singular-hyperbolic attractors.}
Preprint ArXiv:1604.06516.

\bibitem{Bu04}
L. Burslem. 
\emph{Centralizers of partially hyperbolic diffeomorphisms}.
Ergod. Theory Dynamic. Systems 24 (2004) 55--87.

\bibitem{deMelo}
W. de Melo,
\emph{Structural stability of diffeomorphisms on two-manifolds},
Invent. Math.,21:3, (1973) 233--246.


\bibitem{DFPV}
L. D\'iaz, T. Fisher, M. Pacifico, and J. Vieitez, 
\emph{Entropy-expansiveness for partially hyperbolic diffeomorphisms}, 
Discrete Contin. Dyn. Syst., 32, no. 12, 4195-4207, 2012.

\bibitem{Fi09}
T. Fisher.
\emph{Trivial centralizers for codimension-one attractors}, 
Bull. Lond. Math. Soc., vol. 41, no. 1(2009) 51--56.

\bibitem{Franks}
J. Franks. 
\emph{Absolutely structurally stable diffeomorphisms.}, 
Proc. Amer. Math. Soc., 37:1 (1973), 293--296.

\bibitem{Gri}
V. Grines.
\newblock \emph{Structural stability and asymptotic behavior of invariant manifolds of {A}-diffeomorphisms of surfaces.}
\newblock Journal of Dynam. Control. Sys. 3:1 (1997), 91--110.

\bibitem{GZ}
V. Grines and E. Zhuzhoma.
\newblock \emph{On structurally stable diffeomorphisms with codimension one expanding attractors.}
\newblock Trans. Amer. Math. Soc. 357: 2 (2005), 617--667.

 \bibitem{Hay}  S. Hayashi, 
\emph{Connecting invariant manifolds and the solution of the $C^1$ stability and $\Omega$-stability conjectures for flows}, Annals of Math., (2) 145 (1997), 1, 81--137 

 \bibitem{Hiraide} 
K.  Hiraide, \emph{Expansive homeomorphisms of compact surfaces are pseudo Anosov,} Osaka J. Math. 27, 1, (1990) 117--162.

\bibitem{Lewowicz}
J. Lewowicz, 
\emph{Expansive homeomorphisms on surfaces}, Bol. Soc. Bras. Mat., Nova Ser. 20, no. 1, (1989) 113--133.

 \bibitem{KH}
A. Katok and B. Hasselblatt,
\newblock Introduction to the modern theory of dynamical systems,
\newblock Cambridge University Press, 1995.

\bibitem{Ko70}
N. Kopell. 
\emph{Commuting diffeomorphisms}, 
Global analysis, Proceedings of Symposia in Pure Mathematics
XIV, Berkeley, CA, 1968 (Providence, RI, 1970) 165--184.

\bibitem{Ma1} { R. Ma\~n\'e}, \emph{A proof of the $C\sp 1$ stability conjecture},
Publ. Math. IHES {66} (1988) 161--210.

\bibitem{Ma75}
R. Ma\~n\'e. 
\emph{Expansive diffeomorphisms}, 
Lecture Notes in Mathematics 468 (Springer, Berlin, 1975) 162--174.

\bibitem{Moser}
J. Moser.
\newblock \emph{On a theorem of Anosov.} 
\newblock J. Differ. Eqs. 5, 411--440 (1969).

\bibitem{PY89}
J. Palis and J.-C. Yoccoz.
\emph{Rigidity of centralizers of diffeomorphisms.} 
Ann. Sci. \'Ecole Norm. Sup. (4) 22 (1989) 81--98.

\bibitem{Pugh}
C. Pugh.
\emph{The Closing Lemma}, 
Amer. J. Math. 89 (1967), 956--1009.


\bibitem{Plykin74}
R V Plykin. 
\newblock \emph{Sources and sinks of {A}-diffeomorphisms on surfaces.} 
\newblock \emph{Math. USSR Sb.} 23:2 (1974) 233.

\bibitem{Plykin}
R V Plykin. 
\newblock \emph{The structure of the centralizers of Anosov diffeomorphisms of the torus.}
\newblock Russian Math. Surveys, 53:6 (1998)1363--1364. 

\bibitem{Robbin71} 
J. W. Robbin.
\emph{A Structural Stability Theorem.} 
Annals of Math., 94: 3 (Nov., 1971), 447--493.


\bibitem{Robbin72} 
J. W. Robbin.
\emph{Topological conjugacy and structural stability for discrete dynamical systems.} 
Trans. Amer. Math. Soc. 78:6 (1972), 923--952.

\bibitem{Robinson} 
C. Robinson.
\emph{Structural stability of $C^1$-diffeomorphisms.} 
Journal Diff. Equations., 22 (1976) 28--73.

\bibitem{Ro05} 
J. Rocha.
\emph{A note on the $C^0$-centralizer of an open class of bidimensional Anosov diffeomorphisms.} 
Aequationes Math. 76 (2008) 105--111.

\bibitem{Shub} M. Shub, \textit{Global stability of dynamical systems}, Springer Verlag, (1987).

\bibitem{Sm70} 
S. Smale. 
\emph{The $\Omega$-stability theorem.}
Global Analysys Proc. Symp. Pure Math. Vol. 14, A.M.S. Providence, Rhode Island, 1970. 

\bibitem{Sm90}
 S.Smale,
\emph{Dynamics retrospective: great problems, attempts that failed.} Nonlinear science: the next decade (Los Alamos, NM, 1990)
 Phys. D. 51 (1991) 267--273.

\bibitem{Vieitez93} 
J. Vieitez 
\emph{Three-dimensional expansive homeomorphisms. Dynamical systems (Santiago, 1990)}, 
299--323, Pitman Res. Notes Math. Ser., 285, Longman Sci. Tech., Harlow, 1993.

\bibitem{Vieitez02} 
J. Vieitez 
\emph{Lyapunov functions and expansive diffeomorphisms on 3D-manifolds}, 
Ergod. Th. $\&$ Dynam. Sys. (2002) 22, 601--632

\bibitem{Wa70} 
P. Walters.
\emph{Homeomorphisms with discrete centralizers and their ergodic properties}, 
Math. System Theory, Vol. 4, No. 4, p. 322--326, 1970.

\end{thebibliography}
\end{document}